\documentclass{amsart}
\usepackage{fullpage}
\usepackage[latin1]{inputenc}
\usepackage{amssymb}
\usepackage{amsmath}
\usepackage{enumerate}
\usepackage{epsfig,color,caption}
\usepackage{graphicx,color}
\usepackage[curve]{xypic} 
\usepackage{hyperref}
\usepackage{graphicx}

\usepackage[dvipsnames]{xcolor}
\usepackage{colonequals, tikz}
\usepackage{tikz-cd}

\usepackage{color}

\usepackage{longtable,graphics,multirow,ulem}
\usepackage{tikz}
\usepackage{pgf,tikz}
\usetikzlibrary{arrows}
\usepackage[all]{xy}

\newcounter{nootje}
\setcounter{nootje}{1}

\newtheorem{theorem}{Theorem}[section]

\newtheorem{proposition}[theorem]{Proposition}
\newtheorem{corollary}[theorem]{Corollary}

\newtheorem{definition}[theorem]{Definition}
\newtheorem{example}[theorem]{Example}
\newtheorem{remark}[theorem]{Remark}

\newtheorem{rem}[theorem]{Remark}

\numberwithin{equation}{section}
\newcommand{\rk}{\mbox{rank}}

\newcommand{\ra}{\rightarrow}

\newcommand{\PGL}{\operatorname{PGL}}

\newcommand{\E}{\mathcal{E}}
\newcommand{\C }{ \mathbb{C}}

\newcommand{\Z}{\mathbb{Z}}

\usepackage{fancyhdr}
\makeatletter
\def\blfootnote{\xdef\@thefnmark{}\@footnotetext}

\author{Alice Garbagnati}
\address{Dipartimento di Matematica, Univ. Statale di Milano, Milan, Italy}
\email{alice.garbagnati@unimi.it}
\urladdr{ https://sites.google.com/site/alicegarbagnati/}
\author{Cec\'ilia Salgado}
\address{Bernoulli Institute, University of Groningen, the Netherlands}
\email{c.salgado@rug.nl}
\urladdr{ https://www.math.rug.nl/algebra/Main/salgado}

\title[Elliptic fibrations and involutions on K3s]{Elliptic fibrations and involutions on K3 surfaces}
\begin{document}

	\subjclass[2010]{Primary 14J26, 14J27, 14J28.}
	\keywords{Elliptic fibrations, Rational elliptic surfaces, K3 surfaces, Involutions on K3 surfaces}

	\maketitle
	
	\begin{abstract}
	We survey our contributions on the classification of elliptic fibrations on K3 surfaces with a non-symplectic involution. We place them in the more general framework of K3 surfaces with an involution without any hypothesis on its fixed locus or on the action on the symplectic 2-form. We revisit the complete classification of elliptic fibrations on K3 surfaces with a 2-elementary N\'eron--Severi lattice, and give a complete classification of extremal elliptic fibrations on K3 surfaces that are quadratic covers of  rational elliptic surfaces.
	\end{abstract}
	\section{Introduction}

K3 surfaces occupy a prominent role in the classification of algebraic surfaces, lying in the middle of Enriques' classification. Their rich geometry has earned the attention of mathematicians in different fields, ranging from number theory, algebraic differential, and symplectic geometry to differential equations and dynamical systems. One of the implications of this rich geometry is that K3 surfaces are the only class in which surfaces might admit more than one elliptic fibration that is not of product type and has a section. Sterk showed in \cite{Sterk} that on a K3 surface there are finitely many elliptic fibrations modulo the automorphism group. It is therefore natural to search for a classification of such fibrations when they exist.

This survey aims to review our recent contributions to this problem. 

\subsection{Historical background}

Oguiso was one of the first to tackle this classification problem. In \cite{O}, he provides a classification, up to automorphisms, of all elliptic fibrations on Kummer surfaces of the product of two non-isogenous elliptic curves. In particular, he shows that, under generality conditions, such surfaces admit 11 elliptic fibrations which he describes by providing their Mordell-Weil groups and types of singular fibers (a.k.a. the frame lattice). Moreover, he shows that these are 59 distinct fibrations up to automorphisms. Oguiso's method is geometric and relies on the presence of an automorphism on $X$ which he uses as a starting point for his classification. 

A few years later, Nishiyama takes an algebraic approach and provides a classification of all possible frame lattices on K3 surfaces that are cyclic quotients of products of elliptic curves. These are Kummer or singular K3 surfaces. His approach is lattice theoretic and can be applied in a much larger generality to other K3 surfaces. 

Oguiso's techniques were explored by Kloosterman in \cite{Kl} in the particular case of K3 surfaces that are a double cover of the projective plane ramified over six lines in general position. These have Picard number 16, and moreover, the cover involution acts trivially on the N\'eron-Severi group. The presence of this involution allows him to apply Oguiso's method. 

Later, Kumar also tackled the problem by providing a classification, up to automorphisms, of all 25 elliptic fibrations on a generic jacobian Kummer surface, in \cite{Kumar}. Kumar's method is algebraic. More precisely, he exhibits candidates for divisor classes of an elliptic fibration, up to the automorphism group of the K3, in its N\'eron--Severi group. This is feasible thanks to the knowledge of the full automorphism group of the surface. 

Comparin and Garbagnati generalize Kloosterman's results, in \cite{CG}, expanding the classification for K3 surfaces that are double cover of the plane branched over the union of rational curves. There, more fiber types and higher ranks arise but the cover involution still acts trivially on the N\'eron-Severi group. As a consequence of their classification, they obtain a classification of van Geemen-Sarti involutions on such surfaces. 

Nishiyama's method was used by several authors to obtain a list of all possible frame lattices for elliptic fibrations on K3 surfaces. Examples include the K3 associated to the modular group $\Gamma_1(8)$ by \cite{BL}, and the singular K3 with transcendental lattice $\langle 2 \rangle \oplus \langle 6 \rangle$ which also admits an interpretation as a modular surface, as described in \cite{WINE1}.

Shimada adopted a different point of view on the problem providing a classification of all frame lattices of elliptic fibrations on K3 surfaces with finite Mordell--Weil groups in \cite{Shimada}. 

As Oguiso's results discussed above show, the classification of possible frames of elliptic fibrations is usually coarser than the one up to automorphisms of the surfaces. The different types of classifications of elliptic fibrations on K3 surfaces are described in \cite{BKW}. There the authors also discuss when certain classifications coincide. Recently, in \cite{FestiVeniani}, Festi and Veniani provide a method to count elliptic fibrations on K3 surfaces up to automorphisms given the knowledge of the frame lattices.

\subsection{Our contributions}

The study of K3 surfaces that can be realized as double covers of rational elliptic surfaces and the interplay between elliptic fibrations on the K3 and linear systems on the rational surface is the starting point of our series of papers, \cite{GSconcis} and \cite{GSclass}, and the \textit{Women in Numbers Europe 2 and 3} research projects \cite{WINEequ} and \cite{WINEfield}.

Our first contribution, inspired by Oguiso's work (see \cite{O} and Remark \ref{rem:Oguiso}), distinguishes the elliptic fibrations in three classes, according to the action of the cover involution on them. These yield three types of linear systems on the rational quotient. 
In \cite{GSconcis}, we describe such linear systems, which are not necessarily complete, and provide a classification of all elliptic fibrations on some explicit K3 surfaces. In particular, we show that, under certain hypotheses, the elliptic fibrations on a K3 surface induce fibrations in curves of genus 0 or 1 on the rational elliptic surface (cf. \cite[Thm. 4.2]{GSconcis}). We also describe the relationship between the singular fibers of the aforementioned fibrations on the rational surfaces and the elliptic fibrations on the K3 surfaces (cf. \cite[Thms. 5.3 and  5.5]{GSconcis}). 

The paper \cite{GSclass} puts our classification program to work by giving a complete treatment of general K3 surfaces with non-symplectic involutions that fix a curve of positive genus, since the case of a fixed locus formed only by rational curves was already analyzed in \cite{O}, \cite{Kl}, \cite{CG}. We first focus on the next case, namely of curves of genus 1, which is naturally related to rational elliptic surfaces. Indeed, the presence of a genus 1 curve in the fixed locus of a non-symplectic involution assures that the elliptic fibration determined by it on the K3 is carried on to the rational quotient. One of the features of our method is that it oftentimes naturally yields a Weierstrass equation for the elliptic fibrations. This is usually not evident and, as far as we know, most papers around the classification of elliptic fibrations do not provide a clear path. If the fixed locus of the chosen non-symplectic involution contains a curve of genus at least 2, then the relation between fibrations on the K3 and on the rational quotient is less evident. For this reason, if the non--symplectic involution fixes a curve of genus at least 2, we use Nishiyama's method to give a list of all possible frame lattices of elliptic fibrations on the K3 surface.

In \cite{WINEequ} we develop our classification program introduced in \cite{GSconcis} further by classifying elliptic fibrations on certain K3 surfaces that are not general among the ones with a non--symplectic involution (so they do not have a 2-elementary N\'eron--Severi lattice). More precisely, we give a list of all frame lattices of elliptic fibrations on the covers of the modular elliptic surface of level 5. We also describe an algorithm to determine the Weierstrass equations for many of the elliptic fibrations (see \cite[Secs. 5.2 and 6.1.2]{WINEequ}).

Our work takes an arithmetic turn in \cite{WINEfield} when we use our classification to determine fields of definition of elliptic fibrations and their Mordell-Weil groups for certain K3 surfaces. More precisely, we consider K3 surfaces that are quadratic base changes of extremal rational elliptic surfaces either with one reducible fiber or with configurations $(III^*,I_2)$ or $(III^*,III)$, branched over smooth fibers. We show that the degree of the field of definition of the fibration and the Mordell--Weil group is related to the type of elliptic fibration according to the classification introduced in \cite{GSconcis}. 

\subsection{Applications and related work} Our work has been applied in different contexts. In \cite{Demeio}, the knowledge of two distinct elliptic fibrations on a K3 provided in \cite{GSconcis} is used to show that certain classes of K3 surfaces have the Hilbert Property. In \cite{KannoWatari}, the list of elliptic fibrations given in \cite{GSclass} is used to construct a fourfold birational to a Borcea-Voisin orbifold endowed with a flat elliptic fibration morphism. The relation of our work with F-theory and string theory becomes apparent in \cite{ClingherMalmendier}. In \cite{CM}, the authors study certain K3 surfaces with a 2-elementary N\'eron-Severi lattice. For these, they provide equations for projective models using the elliptic fibrations described in \cite{GSclass} and show that the classification of frame lattices of elliptic fibrations given in \cite{GSclass} is actually a classification of all elliptic fibrations up to automorphisms applying the results of \cite{FestiVeniani}. In \cite{CM21} the authors reconsider the classification of the elliptic fibrations given in \cite{CG} and \cite{GSclass}, providing also the  classification up to automorphisms group for general members of the considered families of K3 surfaces.

\subsection{Outline of the paper}

$ $

In Section \ref{Prelim}, we cover the necessary background on fibrations on rational and K3 surfaces. 

Section \ref{K3Invol} focuses on K3 surfaces with involutions, without any extra assumption on the involution, adopting a more general approach to the problem of classifying elliptic fibrations.  We outline a classification of elliptic fibrations depending on the action of the given involution in this general setting, that restricts to the classification introduced in \cite{GSconcis} for non-symplectic involutions with non-empty fixed locus. Moreover, we determine the corresponding linear systems on the quotient surface, which in this general setting can be K3, Enriques, or rational, depending on the type of involution and on its fixed locus. We also discuss the interplay between singular fibers on the elliptic fibration on the K3 and its type with respect to the given involution. We give a more detailed account of elliptic fibrations that are preserved by the involution, dealing with those for which the involution acts on the basis in Subsec. \ref{subsec: type 2}, and those for which the involution acts only on the fibers on Subsec. \ref{subsec: type 1}. In Subsec. \ref{subsec: 2-elementary}, we restrict to K3 surfaces with a 2-elementary N\'eron--Severi lattice, recalling the results of \cite{GSclass}. 

In Section \ref{K3 fromRES}, we restrict the results from Section \ref{K3Invol} to K3 surfaces that are double covers of rational elliptic surfaces, as in our previous contributions \cite{GSconcis}, \cite{GSclass}, \cite{WINEequ} and \cite{WINEfield}. In this setting, if the involution acts only on the fibers, then quotient fibration is conic bundle (see Subsec. \ref{subsec: type 1 on RES}), and if the involution acts on the base of the fibration then the quotient fibration has genus 1 (see Subsec. \ref{subsec: type 2 on RES}). In Subsect \ref{subsec: different}, we show that a given elliptic fibration can be of different type with respect to different involutions. In fact, we provide examples of elliptic fibrations that realize simultaneously any pair of types. 

Section \ref{Sec: extremal} deals with extremal elliptic surfaces. More precisely, we list the extremal elliptic K3 surfaces that are quadratic base changes of extremal rational elliptic surfaces, whose complete classification is given in \cite{MP}. 

In Section 6, we revisit work from \cite[Sec. 5.2]{WINEequ}. More precisely, we present an algorithm to determine Weierstrass equations for certain elliptic fibrations on K3 surfaces that are double covers of rational elliptic surfaces. This is done by providing an alternative description of such K3 surfaces as double covers of minimal rational surfaces with special branch loci. 

\subsection*{Acknowledgements}
Our collaboration started in the first edition of the \textit{Women in Numbers Europe}. We are happy to thank the organizers of that and all the other \textit{Women in Numbers Europe} editions. It has been a great experience to participate in these collaborative workshops.

\section{Preliminaries}\label{Prelim}

Throughout this paper, we work over an algebraically closed field of characteristic zero, unless otherwise specified. 

\begin{definition}
Given a smooth projective algebraic surface $S$, we call a triple $(S, \pi, \sigma)$ an elliptic surface if $\pi: S \rightarrow \mathbb{P}^1$ is a relatively minimal elliptic fibration with at least one singular fiber, and a section $\sigma: \mathbb{P}^1 \rightarrow S$.  

\end{definition}
The hypothesis that $\pi$ is relatively minimal, i.e., that there are no exceptional curves among the fiber components implies that $S$ is minimal, unless $S$ is rational. In that case, the fibration $\pi$ is the unique elliptic fibration on $S$ and the negative curves of $S$ are either $(-1)$ of $(-2)$-curves. The former are sections, the latter components of reducible fibers. In what follows, rational elliptic surfaces are called RES for short.  In particular, we might refer to $S$ as a rational elliptic surface, keeping the morphisms $\pi$ and $\sigma$ implicit.
If $S$ is a K3 surface, then it might admit several relatively minimal elliptic fibrations. In that case, we make explicit which fibration we are referring to.

The reader can find more details, including examples, basic constructions and a table of all possible singular fibers in \cite{Miranda}.
\begin{remark}\label{rem: blowup description}
Given two plane cubics $F, G$ without a common component. Assume that $F$ is smooth. The blow-up of $\mathbb{P}^2$ along the possibly non-reduced scheme $F\cap G$ is a rational elliptic surface. Vice-versa, over an algebraically closed field, any RES is isomorphic to the blow-up of the projective plane in the 9, not necessarily distinct, base points of a pencil of cubics. See \cite[Lem. IV.1.2]{Miranda}.
\end{remark}
 
\textbf{Notation:} Throughout this article, the letter $R$ is reserved for rational elliptic surfaces and $X$, for K3 surfaces.

Despite not allowing for different relatively minimal elliptic fibrations, rational surfaces are, clearly, the only class of surfaces on which elliptic fibrations and fibrations on rational curves might coexist. In what follows, we introduce the definition of special rational fibrations, namely, conic bundles that fits our purposes.

\begin{definition}
Let $R$ be a rational surface. A conic bundle on $R$ is a dominant morphism $f: R \rightarrow \mathbb{P}^1$ such that its generic fiber is a smooth curve of genus 0.
\end{definition}

If $R$ is a RES and $D$ is a fiber of a conic bundle then $D\cdot (-K_R)=2$ by the adjunction formula. Hence, alternatively, by Riemann--Roch, a conic bundle is determined by an effective divisor class $D$ such that $D^2=0$ and $D\cdot (-K_R)=2$. We call the class of such $D$ in $\mathrm{NS}(R)$ a conic class, and use $D$ for the divisor or its class interchangeably.

\begin{example}\label{example: conics on  RES} \textbf{Conic bundles on RES.}
The theories of elliptic surfaces and conic bundles meet in the guise of rational elliptic surfaces. Let $(R, \pi, \sigma)$ be a rational elliptic surface. If $\pi$ has positive Mordell--Weil rank then any two sections $C_1, C_2$ that intersect transversally at a point yield a conic class $D=C_1+C_2$. Indeed, \[ D^2=C_1^2+2C_1\cdot C_2+C_2^2=-1+2-1=0 \text{ and } -K_R\cdot D= -K_R\cdot (C_1+ C_2)=1+1=2.\]
For elliptic surfaces with trivial Mordell--Weil rank, conic classes are given by sums of fiber components with sections.
Alternatively, taking as starting point the description in Remark (\ref{rem: blowup description}), any pencil of lines through one of the blown-up points yields a conic bundle in $R$.

In \cite[Figure 1]{GSconcis} and \cite{DiasCosta} one finds the description of conic classes which are sums of negative curves. 
\end{example}

\section{K3 surfaces with involutions}\label{K3Invol}

In what follows, we consider K3 surfaces with an involution. We distinguish the elliptic fibrations on the K3 according to the action of the involution on their fibers, and study the corresponding linear systems on the quotient surface.
\\

Let $X$ be a K3 surface and $\iota \in \mathrm{Aut}(X)$ an involution. Let $q: X \rightarrow X/\iota$ be the quotient map. Then one of the following holds:
\begin{itemize}
\item[i)] $X/ \iota$ is birational to a K3 surface, if $\iota$ is symplectic;
\item[ii)]  $X/ \iota$ is an Enriques surface, if $\iota$ is non-symplectic and $\mathrm{Fix}(\iota)= \emptyset$.
\item[iii)]$X/ \iota$ is a rational surface, if $\iota$ is non-symplectic and $\mathrm{Fix}(\iota)\neq \emptyset$.
\end{itemize}

The involution $\iota$ is symplectic if, and only if, $X/\iota$ is singular, cf. case i) above. In this case, we denote by $Y$ its minimal model. For simplicity, we use the letter $Y$ to denote $X/\iota$ also when this quotient is smooth.

Let $F\subset X$ be a smooth genus 1 curve. Then, by the Riemann-Roch theorem, $F$ spans a base-point-free 1-dimensional linear system $|F|$ on $X$. In other words, we have a genus 1 fibration $\varphi_{|F|}: X \rightarrow \mathbb{P}^1$. 

Let $F$ be as above. Then one of the following holds:
 \begin{itemize}
\item[a)] $\iota(F)=F$;
\item[b)] $\iota(F)=G$, for some genus 1 curve $G\neq F$ with $F\cdot G=m \geq 0$.
\end{itemize}

Let $D=q(F)$ then $D$ is a curve of genus 0 or 1. The following proposition gives a characterization of the case $g(D)$=0.

\begin{proposition}\label{prop:g(D)}
	Let $X$ be a K3 surface, $\iota \in \mathrm{Aut}(X)$ an involution, $F, D$ and $q$ as above. Then the following are equivalent:\begin{itemize}
		\item[a)] 
 $g(D)=0$; 	 \item[b)] $F\notin Fix(\iota)$, $\iota(F)=F$ and $\iota_{|F}$ has 4 fixed points;
\item[c)] either $\iota$ is non-symplectic and $\iota(E)=E$ for each $E\in |F|$ or $\iota$ is symplectic and there exist $E, E'\in |F|$ such that $\iota(E)=E'$ and $E\neq E'$.\end{itemize}
\end{proposition}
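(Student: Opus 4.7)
The proof splits into (a) $\Leftrightarrow$ (b), a Riemann--Hurwitz computation for $q_{|F}$, and (b) $\Leftrightarrow$ (c), an analysis of how $\iota$ acts on the pencil $|F|$ depending on whether $\iota$ is symplectic.

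For (a) $\Leftrightarrow$ (b), the plan is to determine the degree of $q_{|F} : F \to D$ case by case. If $\iota(F)\neq F$, then a generic point of $F$ has its $\iota$-image off $F$, making $q_{|F}$ birational and forcing $g(D)=1$. If $\iota(F)=F$ with $F\subset\mathrm{Fix}(\iota)$, then $q_{|F}$ is an isomorphism, again giving $g(D)=1$. Otherwise $\iota_{|F}$ is a nontrivial involution of the elliptic curve $F$ and $q_{|F}$ is a double cover ramified at its fixed points. A nontrivial involution of a smooth genus $1$ curve has either $0$ fixed points (translation by $2$-torsion) or exactly $4$ fixed points (an anti-involution $P\mapsto -P+\tau$), and Riemann--Hurwitz gives $g(D)=(4-r)/4$. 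Hence $g(D)=0$ precisely in the four-fixed-point case, which is exactly (b).

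For (b) $\Rightarrow$ (c), suppose (b) holds. When $\iota$ is non-symplectic, the local normal form of $d\iota$ at a fixed point (eigenvalues $+1, -1$) makes $\mathrm{Fix}(\iota)$ a disjoint union of smooth curves. A fixed curve $C$ through one of the four fixed points of $\iota_{|F}$ cannot be $F$ (since $F\not\subset\mathrm{Fix}(\iota)$) nor lie in another fiber of $\varphi_{|F|}$ (distinct fibers are disjoint whereas $C\cap F\neq\emptyset$); hence $C$ dominates $\mathbb{P}^1$. Since $\iota_{|C}=\mathrm{id}$, the induced base action is trivial on $\varphi(C)=\mathbb{P}^1$, whence $\iota(E)=E$ for every $E\in|F|$. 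When $\iota$ is symplectic, suppose for contradiction that $\iota(E)=E$ for every $E\in|F|$; then $\iota_{|E}$ is a fiberwise automorphism and its class in the two connected components of $\mathrm{Aut}(E)$ (translations versus anti-involutions) is constant as $E$ varies over smooth fibers. Translations contradict the four fixed points on $F$, while anti-involutions produce a $1$-dimensional fixed locus, contradicting the $0$-dimensional fixed locus of a symplectic involution. So some pair $E\neq E'$ must be swapped.

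For (c) $\Rightarrow$ (b), one reverses the dichotomies above. In the non-symplectic case with $\iota(E)=E$ for every $E$, the component-rigidity argument together with the hypothesis that $\mathrm{Fix}(\iota)$ is non-empty (hence $1$-dimensional) rules out fiberwise translations and forces $\iota_{|E}$ to be an anti-involution, so $\iota_{|F}$ has four fixed points and $F\not\subset\mathrm{Fix}(\iota)$. In the symplectic case with some swapped pair, the base action is an involution of $\mathbb{P}^1$ with two fixed points, $F$ must be one of the two corresponding preserved fibers, and the eight fixed points of $\iota$ on $X$ can only concentrate on these two smooth fibers, forcing $\iota_{|F}$ to be the anti-involution. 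The main technical obstacle is the rigidity of the connected component of $\iota_{|E}$ in $\mathrm{Aut}(E)$ as $E$ varies, which is what couples the local fiberwise action to the global topology of $\mathrm{Fix}(\iota)$; a related subtlety is that the Enriques regime (non-symplectic with $\mathrm{Fix}(\iota)=\emptyset$) must be implicitly excluded, since fiberwise translations there give $g(D)=1$ despite the first alternative of (c) being satisfied.
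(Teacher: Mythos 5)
Your decomposition ((a)$\Leftrightarrow$(b) via Riemann--Hurwitz for $q|_F$, then relating the action on the pencil to symplecticity) is essentially the paper's, just reorganized as (b)$\Leftrightarrow$(c) instead of (a)$\Rightarrow$(c)$\Rightarrow$(a); the (a)$\Leftrightarrow$(b) part and the (b)$\Rightarrow$(c) analysis are fine. The genuine gap is in (c)$\Rightarrow$(b), non-symplectic case, and it is flagged by your own closing sentence. You rule out fiberwise translations by \emph{adding} the hypothesis $\mathrm{Fix}(\iota)\neq\emptyset$, and then assert that the Enriques regime ``must be implicitly excluded'' because a fixed-point-free non-symplectic involution acting by fiberwise translations would give $g(D)=1$ while satisfying the first alternative of (c). That situation cannot occur: an involution that fixes every member of $|F|$ and acts on each smooth fiber as a translation acts trivially on the base coordinate and on the invariant differential of the fibers, hence preserves the holomorphic $2$-form and is \emph{symplectic}. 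This is exactly the period argument the paper uses (``$\iota_{|E}$ does not preserve the period of $E$, and hence it does not preserve the period of $X$''), and it is the missing ingredient in your write-up: the non-symplectic alternative of (c) already forces $\iota$ to be the elliptic involution (possibly composed with a translation) on every fiber, with no extra assumption on $\mathrm{Fix}(\iota)$ and no Enriques loophole. Without this observation your proof of (c)$\Rightarrow$(b) is incomplete, and your suggested caveat would amount to claiming the proposition is false as stated, which it is not.

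A second, smaller weak point is the symplectic case of (c)$\Rightarrow$(b). You argue that the eight fixed points ``can only concentrate on these two smooth fibers, forcing $\iota_{|F}$ to be the anti-involution,'' but the second preserved fiber need not be smooth, and a count of isolated fixed points distributed over a possibly singular fiber does not by itself force four of them onto $F$. The robust argument is again via the $2$-form: near the preserved fiber $F$ the base involution acts as $t\mapsto -t$, so $\iota^*\omega=\omega$ forces $\iota$ to act by $-1$ on the invariant differential of $F$, i.e.\ $\iota_{|F}$ is the elliptic involution composed with a translation and has four fixed points. (Both you and the paper implicitly take $\iota(F)=F$ in this case; if one insists on reading (c) literally for an arbitrary fiber $F$, that step deserves a word of justification.)
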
	
	\begin{proof} To prove the equivalence between a) and b), we observe that if $\iota(F)=G$ where $F\neq G$, then $q(F)\simeq q(G)\simeq F$, hence $g(D)=1$. So we may assume that $\iota$ acts on $F$, i.e. $\iota(F)=F$, and $D=q(F)$ is $F/\iota_{|F}$. In particular $g(D)=0$ if and only if $\iota_{|F}$ is not the identity and is not fixed point free. Since $\iota$ is an involution, this is equivalent to $\iota_{|F}$ fixes 4 points. 
	
	We show that a) implies c).	
	If $g(D)=0$, then  $\iota(F)=F$, so the linear system $|F|$ is preserved by $\iota$. Therefore either each member of the linear system is preserved or it is mapped to another member of the same linear system.
	In the first case there exists $E\in |F|$ such that $\iota_{|E}\neq id_{|E}$ (otherwise $\iota$ would be the identity on $X$) and $\iota$ cannot be a translation on the fibers, since $g(D)=0$. So $\iota$ is the elliptic involution on each fiber (possibly composed with a translation). This implies that $\iota_{|E}$ does not preserve the period of $E$, and hence it does not preserve the period of $X$, i.e. it is non-symplectic.
	Otherwise, there exist $E,\ E'\in |F|$ such that $\iota(E)=E'$, so $\iota$ is not the identity on the basis of the fibration and it preserves exactly two fibers: $F$ is one of them. We already observed that $\iota_{|F}$ is the elliptic involution (possibly composed with a translation), so $\iota$ is the composition of two non-symplectic involutions, one acting on the basis, the other on the fibers. This implies that $\iota$ is symplectic. 
	
	Viceversa $c)$ implies a). Indeed if $\iota$ is non-symplectic and it is the identity on $|F|$, it is the elliptic involution (possibly composed with a translation) on each fiber, so $D\simeq F/\iota_{|F}$ is a rational curve. If $\iota$ is symplectic and acts as a non-trivial involution on the basis, then $\iota$ is the composition of an involution on the basis and a non-period preserving involution on the fiber. This implies that $\iota_{|F}$ is the elliptic involution and $D\simeq F/\iota_F$ is rational. 
	\end{proof}

\begin{rem}\label{rem: linear systems B and D symplectic} We observe that in the case $\iota$ is symplectic and $g(D)=0$, the surface $X/\iota$ is singular in 8 points and $D$ passes through 4 of them. We consider the divisor $B=\beta^*(D)$ where $\beta:Y\ra X/\iota$ is the minimal resolution. If $E\in |F|$ is a curve not preserved by $\iota$, then $\beta^*(D)\sim \beta^*(q(E))$, and since $q(E)$ does not pass through the singular points of $X/\iota$, $\beta^*(D)^2=\beta^*(q(E))^2=q(E)^2=F^2=0$. In particular $|B|$ defines a fibration on $Y$.\end{rem}

\begin{remark}
If $F\subset \mathrm{Fix}(\iota)$, i.e,  $\iota|_{F}=id|_F$, then $\iota$ is non-symplectic, since it fixes a curve and $D\cong F$.
\end{remark}

\begin{proposition}\label{prop:multisection}
Let $X$ be a K3 surface, $\iota \in \mathrm{Aut}(X)$ an involution and $F\subset X$ a smooth genus 1 curve. If $\iota(F)=G$ and $m>0$, as in b) above, then there is at least one curve $M \in \mathrm{Fix}(\iota)$. In particular, $\iota$ is non-symplectic. 
\end{proposition}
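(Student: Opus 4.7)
The plan is to find a point of $F\cap G$ that is fixed by $\iota$ and then to produce a smooth fixed curve through it from the action of $d\iota$ on the tangent space. Since $\iota(F)=G$ (and hence $\iota(G)=F$), the involution acts on the finite set $F\cap G$ of $m\ge 1$ intersection points.

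The delicate step is to rule out that $\iota$ acts freely on $F\cap G$. Under such a free action $m$ would be even, and on the quotient $q\colon X\to Y=X/\iota$ the common image $D=q(F)=q(G)$ would satisfy $q^{*}D=F+G$, so $D^{2}=m$ and (using $q^{*}K_{Y}=K_{X}=0$, whence $K_{Y}\cdot D=0$) $p_{a}(D)=1+m/2$. To reach a contradiction in the two subcases with no fixed curves at all on $X$ (the Enriques case and the symplectic case), I would exploit the pair of distinct elliptic fibrations $\pi_{F}$ and $\pi_{G}=\pi_{F}\circ\iota$ defined by $|F|$ and $|G|$: the morphism $(\pi_{F},\pi_{G})\colon X\to\mathbb{P}^{1}\times\mathbb{P}^{1}$ is $\iota$-equivariant for the swap involution, hence descends to a morphism $Y\to\mathbb{P}^{2}$; together with Riemann--Hurwitz on $Y$, this is incompatible with $Y$ being Enriques or the quotient of a K3 by a symplectic involution. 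Hence some $P\in F\cap G$ is fixed by $\iota$.

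Next I would study $d\iota_{P}\colon T_{P}X\to T_{P}X$, which sends $T_{P}F$ to $T_{P}G$. In the transverse case $T_{P}F\neq T_{P}G$, its matrix in the basis $\{v_{F},v_{G}\}$ is $\bigl(\begin{smallmatrix}0&1\\1&0\end{smallmatrix}\bigr)$, with eigenvalues $\pm1$ and determinant $-1$. Thus $\iota$ acts as $-1$ on the local $2$-form, hence is non-symplectic, and the $+1$-eigenline integrates to a smooth fixed curve $M$ through $P$. In the tangential case $T_{P}F=T_{P}G$, the operator $d\iota_{P}$ preserves the common tangent line, and a short analysis of $2$-jets at $P$ (using $F\neq G$) rules out $d\iota_{P}=-\mathrm{id}$, yielding eigenvalues $\pm1$ again, so the same conclusion follows. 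The ``in particular'' clause is then immediate: a symplectic involution on a K3 surface has fixed locus equal to $8$ isolated points, so the presence of a curve in $\mathrm{Fix}(\iota)$ forces $\iota$ to be non-symplectic.

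The main obstacle is the first step: once a fixed point $P\in F\cap G$ is at hand the local computation closes the proof, but ruling out the free-action scenarios---an Enriques involution, or a symplectic involution whose $8$ fixed points miss $F\cap G$---requires a genuinely global input on the quotient $Y$, and the Enriques subcase is especially delicate.
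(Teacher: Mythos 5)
Your plan correctly isolates the crux: everything reduces to producing a point of $F\cap G$ that is fixed by $\iota$, and this is precisely the step you do not close. The global argument you sketch does not work as stated: the $\iota$-equivariant map $(\pi_F,\pi_G)\colon X\to\mathbb{P}^1\times\mathbb{P}^1$ does descend to a generically finite degree-$m$ map $Y\to\mathbb{P}^2$, but such maps exist on Enriques surfaces and on symplectic quotients alike, and Riemann--Hurwitz extracts no contradiction from their mere existence. Your local step also has a flaw in the tangential case: in coordinates where a symplectic involution is linearized as $(x,y)\mapsto(-x,-y)$, the curves $y=x^{2}$ and $y=-x^{2}$ are distinct, tangent at the origin, and swapped, so a $2$-jet analysis does not rule out $d\iota_{P}=-\mathrm{id}$. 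For comparison, the paper's proof proceeds differently and does not need your eigenvalue analysis at all: it lets $t$ vary over the whole pencil, asserts that $F_{t}\cap\iota(F_{t})$ consists of fixed points for every $t$, and deduces a curve in $\mathrm{Fix}(\iota)$ because a fixed locus with finitely many components cannot contain infinitely many points.

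You should be aware, however, that the step you flag as delicate is handled by bare assertion in the paper too: a priori $\iota$ only permutes the $m$-point set $F_{t}\cap G_{t}$, and a fixed point is forced only when $m$ is odd. Moreover, the gap does not appear to be fillable in the symplectic case. On $X=\mathrm{Km}(E_{1}\times E_{2})$ let $\iota$ be the symplectic involution induced by translation by a nonzero $2$-torsion point $a\in E_{1}[2]$; its fixed locus consists of $8$ isolated points. In the standard double Kummer notation, the eight-cycle of $(-2)$-curves $N=F_{0}+E_{00}+G_{0}+E_{b0}+F_{b}+E_{b1}+G_{1}+E_{01}$ (with $b\neq 0,a$) is an $I_{8}$ fiber of an elliptic pencil, and using the relations $2F_{i}+\sum_{j}E_{ij}\sim 2F_{i'}+\sum_{j}E_{i'j}$ one computes $N-\iota^{*}N=\tfrac{1}{2}\sum \pm E_{ij}\neq 0$, hence $N\cdot\iota^{*}N=4$. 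A general smooth member $F\in|N|$ therefore satisfies the hypotheses of the proposition with $m=4>0$, yet $\mathrm{Fix}(\iota)$ contains no curve. So neither your argument nor the paper's can be completed as written; the statement (and Corollary \ref{cor:type3}, which depends on it) seems to require an extra hypothesis, for instance that $\iota$ is non-symplectic or that $m$ is odd.
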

\begin{proof}
Let $F_t\in |F|$ and $G_t\in |G|$ such that $\iota(F_t)=G_t$. Then $F_t\cap G_t \in \mathrm{Fix}(\iota)$ and it consists of $m$ points which we denote by $P_1^{(t)}, \cdots, P_m^{(t)}.$  Let $s\neq t$, then $F_s\cap F_t=\emptyset$. Hence the sets $\{P_1^{(t)}, \cdots, P_m^{(t)}\} $ and $ \{P_1^{(s)}, \cdots, P_m^{(s)}\}$ are disjoint. In particular, varying $t \in \mathbb{P}^1$, yields infinitely many points in $\mathrm{Fix}(\iota)$. Since $\mathrm{Fix}(\iota)$ is a union of finitely many proper subvarieties, it contains a curve, which we call $M$. \end{proof}

\begin{remark}
If $m>0$, then the curve $M$ from the Prop. \ref{prop:multisection} is a multisection of degree $m$ of $\varphi_{|F|}$ and $\varphi_{|G|}$. Indeed, $M \cdot F_t =m=M \cdot G_t, \forall F_t\in |F|, G_t \in |G|$.
\end{remark}

\begin{remark}
If $\iota$ is an Enriques involution then $\iota(F)\in |F|$. In particular, if $\iota(F)=G$ then $m=0$. Moreover, if $\iota(F)=F$ then $\iota|_{F}$ is a translation. 
\end{remark}

\begin{definition}\label{def: class}
 Let $\iota$ be an involution on a K3 surface $X$ and  $\varphi_{|F|}: X \rightarrow \mathbb{P}^1$ be as above. We say that $\varphi_{|F|}$ is of
\begin{itemize}
\item[Type 1)] with respect to  $\iota$, if $\iota(F_t) = F_t$, $\forall t \in \mathbb{P}^1$, i.e., $\iota$ acts as an involution in $F_t$;
\item[Type 2)] with respect to $\iota$, if $\iota(F_t)=F_s$, for $F_s:=\varphi_{|F|}^{-1}(s)$;
\item[Type 3)] with respect to $\iota$, if  $\iota(F_t) = G_t$, for $G_t$ a genus 1 curve which is not a fiber of $\varphi_{|F|}$.
\end{itemize} 
\end{definition}

\begin{remark}\label{rem:fibration}
If $\varphi_{|F|}$ is of type 1 or 2 with respect to $\iota$, then
\begin{itemize} 
\item[i)] $\iota$ preserves the class of a fiber of $\varphi_{|F|}$. If it is of type 1 then $\iota$ acts trivially on the base of  $\varphi_{|F|}$. If it is of type 2 then it acts as an involution of the base of $\varphi_{|F|}$, and in particular, $\iota$ preserves two fibers of  $\varphi_{|F|}$;
 \item[ii)] if $X/\iota$ is smooth (i.e. $\iota$ is non-symplectic), and $D=q(F)$ then $\varphi_{|D|}: Y \rightarrow \mathbb{P}^1$ is a fibration. Indeed, $D^2=0$ since the class of the divisor $q^*D$ is a multiple of the class of $F$.
 \item[iii)] if $X/\iota$ is singular (i.e. $\iota$ is symplectic) the linear system $B$ defines a fibration on the desingularization of $Y$, see Remark \ref{rem: linear systems B and D symplectic}.
 \end{itemize}
\end{remark}

\begin{remark}\label{rem:Oguiso}
	The classification from Definition \ref{def: class} was inspired by Oguiso's work \cite{O}. More precisely, Oguiso considered K3 surfaces with a non-symplectic involution acting trivially on the N\'eron--Severi group and distinguished the elliptic fibrations according to the presence of a section in the fixed locus of the involution. The presence of a section on the fixed locus implies that the involution acts trivially on the basis of the elliptic fibration. These are what we called fibrations on type 1 in Def.\ref{def: class}. For remaining fibrations in \cite{O}, the involution gives an involution on the basis. These are called of type 2 in Def.\ref{def: class}. Finally, we observe that there is no analogous of a fibration of type 3 in \cite{O} thanks to the trivial action of the involution on the N\'eron--Severi group.
\end{remark}

We investigate the nature of $Y$ and $\varphi_{|D|}$ in each of the cases described above. 

\begin{corollary}\label{cor: type1 on quotient} If $\varphi_{|F|}$  is of type 1 with respect to $\iota$ then $\varphi_{|D|}:Y \rightarrow \mathbb{P}^1$ is a fibration and $Y$ is either K3 or rational. If $Y$ is K3 then $|D|$ is a genus 1 fibration on $Y$. If $Y$ is rational then $|D|$ is a conic bundle on $Y$.
\end{corollary}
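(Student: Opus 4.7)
The plan is to split according to the trichotomy of quotients listed at the start of Section~\ref{K3Invol} (symplectic~$\iota$, Enriques~$\iota$, non-symplectic with fixed locus), invoke Proposition~\ref{prop:g(D)} to control $g(D)$ in each case, and show that the Enriques alternative is vacuous under the type~1 hypothesis. The fact that $\varphi_{|D|}$ (or, when $X/\iota$ is singular, $\varphi_{|B|}$ with $B=\beta^{*}D$) is indeed a fibration on $Y$ is already contained in Remark~\ref{rem:fibration}(ii)--(iii), so all that remains is to identify $Y$ and the genus of a generic fibre.

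For $\iota$ symplectic, $Y$ is the K3 minimal resolution of the $8$ Nikulin singularities of $X/\iota$. Proposition~\ref{prop:g(D)} characterizes $g(D)=0$ as occurring exactly in the non-symplectic-type-1 or symplectic-type-2 scenarios, neither of which is ours, so $g(D)=1$. To transfer this cleanly to $Y$ I would observe that in the symplectic-type-1 case $\iota$ acts fibrewise as a translation by a $2$-torsion point (translations are the only non-trivial involutions on an elliptic curve that preserve the holomorphic $1$-form, and a non-translation fibrewise involution would generate a non-symplectic component of $\iota$). Consequently $\iota$ is free on the generic smooth fibre $F$, so $F$ avoids the isolated fixed points of $\iota$, $D=q(F)$ avoids the singularities of $X/\iota$, and $B=\beta^{*}D$ inherits $B^{2}=D^{2}=0$ and $g(B)=g(D)=1$, yielding a genus~$1$ fibration on $Y$.

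For $\iota$ non-symplectic, Proposition~\ref{prop:g(D)} together with the type~1 hypothesis forces $g(D)=0$ and, via its condition~(b), four fixed points of $\iota|_{F}$ on every smooth $F\in |F|$. Letting $F$ vary through the pencil, these fixed points sweep out a curve contained in $\mathrm{Fix}(\iota)$, hence $\mathrm{Fix}(\iota)\neq \emptyset$; by case~iii) of the trichotomy, $Y$ is rational, and this simultaneously rules out the Enriques alternative~ii). Since $\varphi_{|D|}\colon Y\to\mathbb{P}^{1}$ is a dominant morphism whose generic fibre is a smooth genus~$0$ curve, by the definition given in Section~\ref{Prelim} it is a conic bundle on $Y$. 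The main subtlety I expect is the symplectic case, specifically verifying that the fibrewise action in type~1 is by translation so that the fixed locus of $\iota$ lies off the generic member $F$; this is what lets $g(D)=1$ and $D^{2}=0$ pass without correction to the minimal resolution.
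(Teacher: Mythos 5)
Your proposal is correct and follows essentially the same route as the paper: the fibration statement comes from Remark~\ref{rem:fibration}, the genus of the generic fibre from Proposition~\ref{prop:g(D)}, and the Enriques case is excluded by the same dichotomy (fibrewise translation forces $\iota$ symplectic, otherwise the fibrewise fixed points give $\mathrm{Fix}(\iota)\neq\emptyset$). You merely spell out a few details the paper delegates to its remarks, such as the fixed points sweeping out a curve and the verification that $B^{2}=0$ in the symplectic type~1 case.
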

\begin{proof}
By Remark \ref{rem:fibration}, $\varphi_{|D|}$ is a fibration on $Y$ and the genus of its generic fiber is given in Proposition \ref{prop:g(D)}. It remains to show that $\iota$ has a non-empty fixed locus. By i) in Remark \ref{rem:fibration}, $\iota$ acts as the identity on the base of $\varphi_{|F|}$. If $\iota|_{F_t}$ acts as a translation of a genus 1 curve for all the smooth fibers $F_t$, then it preserves their period and so it is symplectic. Otherwise $\iota_{F_{t}}$ has some fixed points on  $F_t$. In other words, $\iota$ is symplectic or it is non-symplectic with $\mathrm{Fix}(\iota)\neq \emptyset$. 
\end{proof} 

\begin{corollary}\label{cor:type2} If $\varphi_{|F|}$  is of type 2 with respect to a non-symplectic involution $\iota$ then $\varphi_{|D|}:Y \rightarrow \mathbb{P}^1$ is a genus 1 fibration. 
If $\varphi_{|F|}$  is of type 2 with respect to a symplectic involution $\iota$ then $\varphi_{|B|}:Y \rightarrow \mathbb{P}^1$ is a genus 1 fibration (where $B$ is as in Remark \ref{rem: linear systems B and D symplectic}). 
\end{corollary}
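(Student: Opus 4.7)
The plan is to invoke Remark \ref{rem:fibration} to produce the claimed fibration on $Y$ in each case, and then identify its generic fiber with a generic fiber of $\varphi_{|F|}$ in order to read off the genus; the cases of non-symplectic and symplectic $\iota$ split because the quotient is smooth in the first case and singular in the second.

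For the non-symplectic case, Remark \ref{rem:fibration}(ii) immediately gives that $\varphi_{|D|}: Y \to \mathbb{P}^1$ is a fibration. To see that its generic fiber has genus 1, I would rule out $g(D)=0$ via Proposition \ref{prop:g(D)}: of the two alternatives in condition (c), only the first is compatible with $\iota$ being non-symplectic, and that alternative requires $\iota(E)=E$ for every $E\in|F|$, which is precisely what is excluded by type 2 (where $\iota$ acts non-trivially on the base of $\varphi_{|F|}$). Alternatively and more directly, for a generic $E\in|F|$ type 2 gives $\iota(E)\in|F|$ with $\iota(E)\neq E$, so $E$ and $\iota(E)$ are disjoint fibers and $q|_E: E\to q(E)=D$ is an injective morphism of smooth curves, hence an isomorphism, and $g(D)=g(E)=1$.

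For the symplectic case, Remark \ref{rem:fibration}(iii) together with Remark \ref{rem: linear systems B and D symplectic} tells us that $|B|$ defines a fibration $\varphi_{|B|}: Y\to\mathbb{P}^1$, where $B=\beta^*D$ and $\beta: Y\to X/\iota$ is the minimal resolution. Again, for a generic $E\in|F|$, type 2 forces $\iota(E)\neq E$, so $q|_E: E\to q(E)$ is an isomorphism and $g(q(E))=1$. Since a generic $E$ avoids the eight isolated fixed points of the symplectic involution, $q(E)$ avoids the eight singular points of $X/\iota$, hence $\beta^*(q(E))=q(E)$ as a divisor on $Y$; this is a member of $|B|$, so a generic fiber of $\varphi_{|B|}$ is a smooth genus 1 curve. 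The main subtlety is precisely this passage to $Y$ and to $|B|$: one cannot use $D$ directly, because Proposition \ref{prop:g(D)}(c) forces $g(D)=0$ whenever a symplectic $\iota$ swaps two distinct members of $|F|$. Pulling $D$ back to the minimal resolution effectively splits off the exceptional components over the four singular points of $X/\iota$ through which $D$ passes, leaving a genus 1 fibration.
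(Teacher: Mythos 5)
Your argument is correct, and for the non-symplectic half it is essentially the paper's: both combine Remark \ref{rem:fibration} with Proposition \ref{prop:g(D)} to get the fibration $\varphi_{|D|}$ and to exclude $g(D)=0$ (condition (c) of Proposition \ref{prop:g(D)} would force $\iota(E)=E$ for all $E\in|F|$, which type~2 rules out). Where you diverge is the symplectic half. The paper disposes of it in one line: $\iota$ symplectic implies $Y$ is a K3 surface, and a K3 admits no fibration by rational curves, so any fibration on it is automatically of genus~1. You instead trace the generic fiber through the quotient: a generic $E\in|F|$ satisfies $\iota(E)\neq E$, so $q|_E$ is an isomorphism onto $q(E)$, which avoids the eight singular points of $X/\iota$ and hence equals $\beta^*(q(E))\in|B|$, giving a smooth genus~1 generic fiber of $\varphi_{|B|}$. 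Your route is more computational but also more informative -- it identifies the generic fiber explicitly and correctly flags the subtlety that the image $D=q(F)$ of a \emph{preserved} fiber $F$ is rational (consistent with Proposition \ref{prop: type 2 fibrations}~i), where $\beta^*(D_{t_i})$ is reducible and non-reduced), which is why one must pass to $|B|$ on the resolution; the paper's appeal to the K3 property of $Y$ is shorter but hides this structure. Both arguments are sound.
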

\begin{proof}
This is a combination of i) in Remark \ref{rem:fibration} and Proposition \ref{prop:g(D)}. We observe that if $\iota$ is symplectic, then $Y$ is a K3 surface and each fibration on it is necessarily a genus 1 fibration. 
\end{proof}

\begin{corollary}\label{cor:type3}  If $\varphi_{|F|}$  is of type 3 with respect to $\iota$ then $\dim |D|>1$ and $\iota$ is a non-symplectic involution with non-empty fixed locus. In particular, $Y$ is a rational surface.
\end{corollary}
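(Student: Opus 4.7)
The plan is to first establish that $\iota$ must be non-symplectic with non-empty fixed locus, so that $Y$ is rational by the trichotomy at the start of Section~\ref{K3Invol}, and then to compute $h^{0}(Y,D)$ by identifying it with the $\iota$-invariant part of $H^{0}(X,F+G)$ and exhibiting three linearly independent invariant sections.

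For the first step I would rule out the two remaining options for $\iota$. If $\iota$ is an Enriques involution, the remark preceding Definition~\ref{def: class} gives $\iota(F)\in |F|$, contradicting $G=\iota(F)\notin |F|$ built into Type~3. If $\iota$ is symplectic, Proposition~\ref{prop:multisection} forces $m:=F\cdot G=0$; but then $F$ and $G$ are effective classes on a K3 surface with $F^{2}=G^{2}=F\cdot G=0$, so the Hodge index theorem applied to $F^{\perp}\subset \mathrm{NS}(X)$ (which is negative semi-definite with kernel $\langle F\rangle$) forces $F$ and $G$ to be numerically, hence linearly (as $\mathrm{Pic}(X)$ is torsion-free), proportional. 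Since $G$ is smooth and irreducible, and $|kF|$ contains no smooth irreducible member for $k\ge 2$, this forces $G\sim F$, again contradicting $G\notin |F|$. Hence $\iota$ is non-symplectic with $\mathrm{Fix}(\iota)\neq\emptyset$, and $Y$ is rational.

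For the dimension count, since $\iota(F)=G\ne F$ we have $q^{*}D=F+G$, whence $D^{2}=\tfrac{1}{2}(F+G)^{2}=F\cdot G=m\ge 1$. The identity $q_{*}\mathcal{O}_{X}=\mathcal{O}_{Y}\oplus \mathcal{L}^{-1}$, for a suitable square root $\mathcal{L}$ of the branch divisor, gives $H^{0}(Y,D)\cong H^{0}(X,F+G)^{\iota}$. To produce three independent invariants I would pick a basis $s_{0},s_{1}$ of $H^{0}(X,F)$, set $t_{i}:=\iota^{*}s_{i}$ so that $\{t_{0},t_{1}\}$ is a basis of $H^{0}(X,G)$ with $\iota^{*}t_{i}=s_{i}$, and consider the three products $s_{0}t_{0}$, $s_{1}t_{1}$, $s_{0}t_{1}+s_{1}t_{0}$. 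All three are $\iota$-invariant because $\iota^{*}(s_{i}t_{j})=s_{j}t_{i}$, and they are linearly independent because the map $(\varphi_{|F|},\varphi_{|G|})\colon X\to \mathbb{P}^{1}\times\mathbb{P}^{1}$ is generically finite of degree $m\ge 1$, so the multiplication $H^{0}(X,F)\otimes H^{0}(X,G)\to H^{0}(X,F+G)$ is injective on the four-dimensional subspace spanned by the $s_{i}t_{j}$. This yields $h^{0}(Y,D)\ge 3$, i.e.\ $\dim|D|\ge 2$.

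The most delicate point is the symplectic case of the first step: showing that $F\cdot G=0$ really forces $G\sim F$ on $X$, which rests on the standard but non-trivial fact that $|kF|$ with $k\ge 2$ contains no smooth irreducible member on a K3 surface. The rest of the argument is linear-algebraic bookkeeping.
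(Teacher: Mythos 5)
Your proof is correct, and on two points it is actually more complete than the one in the paper, though the skeleton is the same. For the first assertion the paper argues in one line: type 3 forces $m=F\cdot G>0$, so Proposition \ref{prop:multisection} produces a curve in $\mathrm{Fix}(\iota)$, which simultaneously rules out the symplectic and Enriques cases. You instead exclude Enriques and symplectic separately, and in doing so you supply the justification the paper leaves implicit, namely the Hodge-index argument that $F^2=G^2=F\cdot G=0$ would force $G\sim F$ and hence contradict type 3; note that this step does not use symplecticity at all, so it is exactly what licenses your later claim $m\geq 1$ --- it would be worth saying so explicitly rather than burying it inside the symplectic case. For the second assertion the routes genuinely diverge: the paper only computes $D^2=m>0$ and concludes that $|D|$ is not a fibration (getting $\dim|D|>1$ from this requires either $m\geq 2$, which holds because a degree-$1$ map $X\to\mathbb{P}^1\times\mathbb{P}^1$ would make the K3 rational, or some further input), whereas you prove $h^0(Y,D)\geq 3$ directly by identifying $H^0(Y,D)$ with $H^0(X,F+G)^{\iota}$ and exhibiting the three invariant products $s_0t_0$, $s_1t_1$, $s_0t_1+s_1t_0$, whose independence follows from the dominance of $(\varphi_{|F|},\varphi_{|G|})$. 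This is a cleaner and fully rigorous derivation of $\dim|D|>1$; the only point to tidy is the linearization of $\iota$ on $\mathcal{O}_X(F+G)$, since a priori $\iota^*t_i=\pm s_i$ --- taking $s_0$, $t_0$ to be the canonical sections vanishing on $F$ and $G$ and using that $F+G$ is an $\iota$-invariant divisor fixes the sign to be $+1$.
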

\begin{proof}
If $\varphi_{|F|}$ is of type $3$ then $F_t \cdot G_t =m>0$. By Prop. \ref{prop:multisection}, there is a curve $M\subset \mathrm{Fix}(\iota)$. Hence $\iota$ is non-symplectic with a non-empty fixed locus. 
For the dimension of $|D|$, observe that $q^*(D)=F+G$ and $2D^2=(q^*(F+G))^2=2m>0$. Hence $D^2=m>0$ and $|D|$ does not induce a fibration on $Y$.\end{proof}

\subsection{Fibrations of type 2}\label{subsec: type 2}
By Corollary \ref{cor:type2}, fibrations of type 2 with respect to $\iota$ induce genus 1 fibrations on $Y$. We have a closer look at such fibrations in what follows.
Let $q:X \rightarrow Y$ be the quotient map, possibly composed with a birational map. By an abuse of notation we denote $|D|$ both the linear system $|D|$ when $X/\iota$ is smooth and the linear system $|\beta^*D|=|B|$ when $\beta:Y\ra X/\iota$ is the desingularization of $Y$. We have the following commutative diagram

$$\xymatrix{X \ar@{-->}[rr]^q_{2:1} \ar[d]_{\varphi_{|F|}} &&Y \ar[d]^{\varphi_{|D|}}  \\ \mathbb{P}^1 \ar[rr]^{\psi}_{2:1}&& \mathbb{P}^1}
$$
where $\psi$ is the map induced by $q$ on the base of $\varphi_{|F|}$. In particular, $\psi$ is branched above two points $t_0,t_1 \in \mathbb{P}^1$. Let $D_{t_0}$ and $D_{t_1}$ be the fibers of $\varphi_{|D|}$ above them. Then the restriction $q|_{F_t{_i}}: F_{t_i} \rightarrow D_{t_i}$ is $1:1$, if, and only if, $D_{t_i}$ is in the branch locus of $q$, or equivalently, if, and only if, $D_{t_i}$ is not a multiple fiber of $\varphi_{|D|}$.

\begin{proposition}\label{prop: type 2 fibrations}
With the notation as above, let $\varphi_{|F|}$ be a fibration of type 2 with respect to $\iota$. Then the following holds.
\begin{itemize}
\item[i)] If $Y$ is K3 then $\varphi_{|D|}$ has no multiple fibers, and at least one among $D_{t_0}$ and $D_{t_1}$ is reducible, with a non-reduced component.
\item[ii)] If $Y$ is Enriques then both $D_{t_0}$ and $D_{t_1}$ are multiple fibers.
\item[iii)] If $Y$ is rational then at most one among $D_{t_0}$ and $D_{t_1}$ is a multiple fiber.
\end{itemize}
\end{proposition}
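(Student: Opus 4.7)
The plan is to exploit the divisor identity $q^*D_{t_i}=2F_{t_i^*}$ arising from $\psi^*(t_i)=2[t_i^*]$, and to read off the multiplicity of $D_{t_i}$ as a fibre of $\varphi_{|D|}$ from the étale-versus-ramified behaviour of $q$ in a neighbourhood of $F_{t_i^*}$ together with the local action of $\iota$ on $F_{t_i^*}$. In all three cases the strategy is the same: combine this identity with the branch structure of $q$ along the invariant fibres, the Nikulin-type constraints on $\mathrm{Fix}(\iota)$, and the appropriate birational classification downstairs.

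Case (ii) is the cleanest. An Enriques involution acts freely, hence freely on each $F_{t_i^*}$, so $q$ is étale in a neighbourhood of $F_{t_i^*}$. Writing $D_{t_i}=m\tilde D_{t_i}$ with $\tilde D_{t_i}$ primitive, the étale pullback yields $q^*\tilde D_{t_i}=F_{t_i^*}$, and then $mF_{t_i^*}=q^*D_{t_i}=2F_{t_i^*}$ forces $m=2$, so both $D_{t_0}$ and $D_{t_1}$ are double fibres.

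For (i), the no-multiple-fibres statement is the standard consequence of $K_X=0$ in the canonical bundle formula for genus one fibrations on a K3 surface. For the reducibility claim, any isolated $\iota$-fixed point must lie in an $\iota$-invariant fibre: a fixed point of $\iota$ on $F_s$ would force $F_{\iota(s)}=F_s$, hence $s=\iota(s)$. Thus the $8$ fixed points of the symplectic $\iota$ all lie in $F_{t_0^*}\cup F_{t_1^*}$, and at least one invariant fibre, say $F_{t_0^*}$, carries a fixed point $p$. The $A_1$ singularity of $X/\iota$ at $q(p)$ resolves to a $(-2)$-curve $E\subset Y$ lying in $D_{t_0}$. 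Letting $\bar C$ be the strict transform of $q(F_{t_0^*})$ and writing $D_{t_0}=a\bar C+bE+\cdots$, the relations $D_{t_0}\cdot E=0$, $\bar C\cdot E=1$, $E^2=-2$ give $a=2b\geq 2$, so $\bar C$ appears with multiplicity at least $2$ and $D_{t_0}$ is reducible with a non-reduced component.

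Case (iii) is the hardest and relies on the classification of rational surfaces carrying a relatively minimal genus one fibration. The relatively minimal model $\tilde Y$ of $\varphi_{|D|}$ is either a rational elliptic surface (no multiple fibres) or a Halphen surface (exactly one multiple fibre), so $\tilde Y$ carries at most one multiple fibre. On the other hand, contracting a $(-1)$-curve $C_1$ with coefficient $a_1$ inside a fibre $F=\sum a_iC_i$ produces the residual fibre $\sum_{i\geq 2}a_iC_i'$, whose gcd of coefficients is at least $\gcd_i a_i$; hence multiple fibres of $Y$ push forward to multiple fibres of $\tilde Y$, and $Y$ itself carries at most one multiple fibre. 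The bound ``at most one'' rather than ``none'' is the main technical obstacle: it cannot be extracted from the local divisorial analysis on $F_{t_i^*}$ alone, since a fibre with only high-multiplicity $\iota$-fixed components could a priori descend to a multiple fibre on both sides. It is precisely the Halphen/rational-elliptic dichotomy for the minimal model, combined with the contraction-invariance of fibre multiplicity, that does the work.
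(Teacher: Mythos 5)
Your proof is correct, and for parts (i) and (ii) it follows essentially the same route as the paper: for (ii) the paper likewise observes that an Enriques involution has empty fixed locus, so $q$ is \'etale and both $D_{t_0}$, $D_{t_1}$ are double fibers; for (i) it notes that the fixed points of the symplectic involution lying on an invariant fiber become $A_1$-points of $X/\iota$ through which $q(F_{t_i})$ passes, and resolving produces a non-reduced component --- your explicit computation $a=2b\geq 2$ at the exceptional $(-2)$-curve just makes that step precise. The genuine divergence is in (iii). The paper stays local to the branch divisor: since $\iota$ is non-symplectic with non-empty fixed locus and $\mathrm{Fix}(\iota)$ is a union of smooth curves contained in $F_{t_0}\cup F_{t_1}$, at least one invariant fiber, say $F_{t_0}$, contains a fixed curve as a component; hence $D_{t_0}$ shares a component with the branch locus of $q$, and the paper concludes that such a fiber cannot be multiple (the point being that $q$ is ramified over a fiber component exactly when its coefficient is odd). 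You instead pass to the relatively minimal model and invoke the rational-elliptic/Halphen dichotomy, together with the observation that contracting a $(-1)$-curve inside a fiber can only increase the gcd of the coefficients. Both work; the paper's argument is self-contained and identifies \emph{which} of the two fibers is non-multiple (the one meeting the branch locus), while yours avoids any analysis of $\mathrm{Fix}(\iota)$ and actually proves the stronger statement that any genus 1 fibration on any rational surface has at most one multiple fiber. Two cosmetic points: in (i) you write $K_X=0$ where you mean the quotient K3 surface $Y$, and in (ii) the identity $q^*\tilde D_{t_i}=F_{t_i}$ also uses that $F_{t_i}$ is primitive, which holds because $X$ is a K3.
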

\begin{proof}
First observe that $\mathrm{Fix}(\iota)\subset F_{t_0}\cup F_{t_1}$. If $\mathrm{Fix}(\iota)=\emptyset$ then $\iota$ is an Enriques involution and the branch locus of $q$ is empty. Hence $D_{t_0}$ and $D_{t_1}$ are multiple fibers of $\varphi_{|D|}$. This is case ii). Assume that $\mathrm{Fix}(\iota)\neq \emptyset$. If $\iota$ is symplectic then $\iota|_{F_{t_i}}$ fixes 4 points, for $i=0,1$. Hence $q(F_{t_i})$ is a singular curve of multiplicity 2. After resolving its singularities, we obtain a non-multiple fiber $D_{t_{i}}$ with non-reduced components. If $\iota$ is non-symplectic, then at least one among $F_{t_{0}}$ and $F_{t_{1}}$, say $F_{t_0}$ has some components contained in $\mathrm{Fix}(\iota)$. Hence $D_{t_{0}}$ shares a common component with the branch locus of $q$ and, in particular, it cannot be a multiple fiber.
\end{proof}

Under the assumption that $\varphi_{|D|}$ has a section then the statement of Proposition \ref{prop: type 2 fibrations} becomes simpler.

\begin{corollary}\label{cor: type 2 with section}
With the notation as above, let $\varphi_{|F|}$ be a fibration of type 2 with respect to $\iota$. Suppose that $\varphi_{|D|}$ has a section. Then 
\begin{itemize}
\item[i)] $Y$ is not an Enriques surface;
\item[ii)] $D_{t_0}$ and $D_{t_1}$ are both branch curves for $q$;
\item[iii)] $F_{t_i} \in \{I_{2n}, IV, I_0^*, IV^*\}$, for $i=0, 1$; 
\item[iv)] If $Y$ is rational then $D_{t_0}$ and $D_{t_1}$ are reduced fibers of $\varphi_{|D|}$.
\end{itemize}
\end{corollary}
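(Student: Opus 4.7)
I would prove the four items in order. For (i), suppose by contradiction that $Y$ is Enriques. Proposition~\ref{prop: type 2 fibrations}(ii) forces both $D_{t_0}$ and $D_{t_1}$ to be multiple fibers of $\varphi_{|D|}$; but a section meets each fiber in a reduced point, hence rules out multiple fibers. Item (ii) is then immediate: $Y$ is K3 or rational, the section still excludes any multiple fiber, and the equivalence recalled in the paragraph preceding Proposition~\ref{prop: type 2 fibrations} (namely, $D_{t_i}$ is not multiple iff $D_{t_i}$ is in the branch locus of $q$) forces $D_{t_0}$ and $D_{t_1}$ both to lie in the branch locus.

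For (iii), (ii) tells us that $F_{t_i}$ shares an irreducible component with $\mathrm{Fix}(\iota)$; in particular $\iota$ is non-symplectic and $\iota\vert_{F_{t_i}}$ is an involution of a Kodaira fiber acting as the identity on at least one irreducible component. I would then classify, case by case along Kodaira's list, which fiber types admit such an involution whose fixed set is a disjoint union of smooth curves (i.e.\ with no isolated fixed points outside the fixed curves, as required for a non-symplectic involution on a K3). The key constraint is that every node of $F_{t_i}$ must either lie on a pointwise-fixed component, or be a fixed point of $\iota\vert_{C_j}$ on a non-fixed component $C_j$, in which case both neighbors of $C_j$ in the dual graph must themselves be pointwise fixed (so that the two nodal fixed points of $\iota\vert_{C_j}$ actually lie on the fixed locus). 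For the cyclic fibers $I_n$ this combinatorial condition translates into the pointwise-fixed components forming an independent dominating set of the $n$-cycle, which is possible only when $n$ is even. Analogous analyses of the dual graphs of $IV, I_0^*, IV^*$ each produce an admissible involution (for instance on $I_0^*$, fixing two opposite leaves pointwise and swapping the remaining pair of leaves, with the central component acted upon non-trivially), while the dual graphs of $I_n^*$ for $n\geq 1$, the odd cycles $I_{2n+1}$, and $II, III, II^*, III^*$ fail some part of this compatibility. This case-by-case enumeration of admissible involutions on Kodaira fibers is the main obstacle of the argument.

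For (iv), if $Y$ is rational with a section, then $(Y,\varphi_{|D|})$ is a rational elliptic surface, which has no multiple fibers. Computing $D_{t_i}$ from $F_{t_i}$ for each Kodaira type from (iii) together with the corresponding involution identified above, the pointwise-fixed components of $F_{t_i}$ contribute reduced components to $D_{t_i}$ (these components are doubled in $q^{*}D_{t_i}$ precisely because they lie in the ramification), while a swapped pair $\{C_j,\iota(C_j)\}$ contributes a single reduced component of $D_{t_i}$ whose pullback is $C_j+\iota(C_j)$. In each case no multiplicity greater than one appears in $D_{t_i}$, so $D_{t_0}$ and $D_{t_1}$ are reduced, proving (iv).
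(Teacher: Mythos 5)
Your items (i) and (ii) reproduce the paper's own argument (a section forbids multiple fibers, so case ii) of Prop.~\ref{prop: type 2 fibrations} is excluded and both $D_{t_i}$ lie in the branch locus). For (iii) you take a genuinely different route: the paper simply notes that, since $D_{t_0}$ and $D_{t_1}$ are branch fibers, each $F_{t_i}$ is obtained from a Kodaira fiber by a \emph{ramified} quadratic base change, and the standard table of such base changes ($I_n, I_n^*\mapsto I_{2n}$; $II, IV^*\mapsto IV$; $III, III^*\mapsto I_0^*$; $IV, II^*\mapsto IV^*$) gives the list at once. Your alternative --- enumerating the involutions of Kodaira fibers whose fixed locus is a smooth divisor --- can be made to work (the alternation argument on the dual graph is sound, and it uses correctly that a non-symplectic involution of a K3 has no isolated fixed points), but as written it is a programme rather than a proof: the case-by-case exclusion of $I_{2n+1}$, $II$, $III$, $I_m^*$ ($m\ge 1$), $II^*$, $III^*$ is precisely the part you defer, and it is far longer than the paper's one-line reduction.

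The genuine gap is in (iv): your multiplicity bookkeeping is wrong. Since $\psi$ ramifies at $t_i$, one has $q^*D_{t_i}=2F_{t_i}$. Hence a pointwise fixed component $C$ of multiplicity $m$ in $F_{t_i}$ does give $q(C)$ multiplicity $m$ in $D_{t_i}$, but a component that is preserved and \emph{not} fixed, and likewise a swapped pair $\{C_j,\iota(C_j)\}$ (for which $q^*\overline{C}=C_j+\iota(C_j)$ with no ramification), give $q(C)$ multiplicity $2m$, not $1$. Concretely, if the branch fiber of $\mathcal{E}_R$ is of type $I_2$ with components $D_1,D_2$, then $X/\iota$ is the blow-up of $R$ at the two nodes and the fiber of $\varphi_{|D|}$ there is $\tilde D_1+\tilde D_2+2E_1+2E_2$, which is not reduced; similarly the central multiplicity-$2$ component of an $I_0^*$ fiber $F_{t_i}$, which in your own description is preserved but not fixed, would contribute a multiplicity-$4$ component. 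So ``no multiplicity greater than one appears in $D_{t_i}$'' is false on $X/\iota$: statement (iv) concerns the fibers on the \emph{relatively minimal} model, after the non-fixed components have been contracted, and cannot be read off from your fibrewise computation. The paper's argument is global: if one of the branch fibers of the relatively minimal rational fibration were non-reduced (type $I_m^*$, $II^*$, $III^*$ or $IV^*$), the Euler number of the base-changed surface would be $12$ or $0$ instead of $24$, contradicting that $X$ is a K3. Some such global input is needed; replace your multiplicity count by this Euler characteristic computation (or by an argument that a double cover branched along a fiber with a non-reduced component cannot yield a K3).
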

\begin{proof}
The presence of a section prevents the existence of multiple fibers, hence case ii) of Prop. \ref{prop: type 2 fibrations} cannot happen and $D_{t_i}$ are both branch fibers. This implies that $F_{t_i}$ are both obtained by a quadratic base change on possibily sigular fibers of an elliptic fibration, which proves iii).
Moreover, the presence of a section of $\varphi_{|D|}$, also assures that $\varphi_{|F|}$ admits a section.
If $Y$ is rational, and one of $D_{t_i}$ is non-reduced, then an Euler number computation, implies that the Euler characteristic $e(X)=12$ or $0$, and hence $X$ would not be a K3. 
\end{proof}

\begin{remark}
Corollary \ref{cor: type 2 with section} allows us to identify candidates for fibrations of type 2 with respect to a given involution among the list of all elliptic fibrations on a given K3 surface. Indeed, any fiber type which is not of the type listed in iii) of Corollary \ref{cor: type 2 with section} occurs in pairs, and there are at most two fiber types that occur alone.
\end{remark}

\subsection{Fibrations of type 1}\label{subsec: type 1}
Given an involution $\iota \in \mathrm{Aut}(X)$. We discuss the properties of fibrations of type 1 with respect to $\iota$. We first deal with symplectic involutions and later with non-symplectic ones.

\begin{proposition}\label{pro:2-torsion}
Let $X$ be a K3 surface and $\iota \in \mathrm{Aut}(X)$ a symplectic involution. Let $\varphi_{|F|}$ be an elliptic fibration on $X$ of type 1 with respect to $\iota$, and $\varphi_{|D|}$ the respective elliptic fibration on $Y$. Then both $\varphi_{|F|}$ and $\varphi_{|D|}$ admit a $2$-torsion section. In particular, both fibrations cannot have fibers of types $II^*$ or $IV^*$.
\end{proposition}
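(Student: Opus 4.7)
The plan is to produce a non-trivial 2-torsion section of $\varphi_{|F|}$ directly from the fiberwise action of $\iota$, transfer it to $\varphi_{|D|}$ via the 2-isogeny induced by the quotient, and finally exclude fibers of type $II^*$ and $IV^*$ by a Weierstrass computation.

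First I would show that $\iota$ acts as a translation by a 2-torsion point on each smooth fiber. The nowhere-vanishing holomorphic 2-form $\omega_X$ restricts, via adjunction together with the triviality of the normal bundle $N_{F_t/X}$ (since $F_t^2 = 0$), to a non-vanishing holomorphic 1-form $\eta_t$ on $F_t$. Since $\iota$ is symplectic, $\iota^*\omega_X = \omega_X$, hence $\iota^*\eta_t = \eta_t$, and an order-2 automorphism of an elliptic curve preserving its invariant 1-form is a translation by a 2-torsion point. Because $\varphi_{|F|}$ is of type 1, $\iota$ preserves every fiber, and if $O$ denotes the zero section, then $T(t) := \iota(O(t))$ defines a section $T \neq O$ with $2T = O$, which is the desired 2-torsion section of $\varphi_{|F|}$.

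For $\varphi_{|D|}$: since $\iota|_{F_t}$ is translation by $T(t)$, the restriction $q|_{F_t}\colon F_t \to F_t/\langle T(t)\rangle$ is a 2-isogeny of elliptic curves onto the smooth fibers of $\varphi_{|D|}$. The dual 2-isogeny supplies a 2-torsion subgroup in each smooth fiber of $\varphi_{|D|}$, which extends over the base (by the smoothness of $Y$ and properness of $\varphi_{|D|}$) to a non-trivial 2-torsion section.

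Finally, to exclude fibers of type $II^*$ and $IV^*$, I would use that a Jacobian elliptic fibration with a 2-torsion section admits a Weierstrass form $y^2 = x(x^2 + ax + b)$, for which $\Delta = 16b^2(a^2-4b)$ and $c_4 = 16(a^2 - 3b)$. At a base point $t_0$, set $m = v_{t_0}(b)$ and $n = v_{t_0}(a^2-4b)$, so $v(\Delta) = 2m + n$; moreover the identity $a^2 - 3b = (a^2 - 4b) + b$ yields $v(c_4) \geq \min(m,n)$, with equality unless $m = n$. A short enumeration of the nonnegative integer solutions of $2m + n = 8$ (resp.\ $2m + n = 10$) shows $\min(m,n) \leq 2$ (resp.\ $\leq 3$), and the case $m = n$ has no integer solution in either equation. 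Since Tate's algorithm requires $v(c_4) \geq 3$ for type $IV^*$ (where $v(\Delta) = 8$) and $v(c_4) \geq 4$ for type $II^*$ (where $v(\Delta) = 10$), and since both Weierstrass forms with $v(\Delta) \in \{8, 10\}$ are automatically minimal, both fiber types are excluded. The main obstacle is this last case analysis, where the Tate thresholds on $c_4$ must be compared against the constraint imposed by $v(\Delta)$ via the elementary identity above.
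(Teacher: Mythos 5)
Your proposal is correct and follows the same skeleton as the paper's proof: (1) $\iota$ acts on each fiber as translation by a $2$-torsion point, giving the section $T=\iota\circ O$; (2) the quotient map is a fiberwise $2$-isogeny, so $\varphi_{|D|}$ inherits a $2$-torsion section; (3) exclude $II^*$ and $IV^*$. The differences lie in the details of (2) and (3). For (1), the paper simply asserts the fiberwise statement, whereas you justify it via invariance of the induced $1$-form on the fibers; that is the right argument. For (2), the paper writes the explicit quotient Weierstrass equation $y^2=x\bigl(x^2-2a(t)x+(a(t)^2-4b(t))\bigr)$ from Silverman--Tate, which exhibits the $2$-torsion section of $\varphi_{|D|}$ at a glance; your appeal to the dual isogeny is equivalent, but you should note why the nontrivial element of the kernel of the dual isogeny is rational over the function field of the base: it is the common image of the two remaining $2$-torsion points of the generic fiber, which individually may only live in a quadratic extension. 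For (3), the paper cites Miranda's table of fiber types incompatible with torsion sections, while you rederive the incompatibility by a direct valuation computation with $c_4$ and $\Delta$; your enumeration is correct and has the virtue of being self-contained, with the one caveat that Tate's thresholds apply to a \emph{minimal} equation at $t_0$. This is easily arranged: on a local minimal model the $x$-coordinate of a $2$-torsion point is integral, so translating it to $(0,0)$ preserves minimality and yields an equation of the shape $y^2=x(x^2+ax+b)$ with $v(\Delta)=8$ or $10$, to which your argument applies.
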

\begin{proof}
Since $\varphi_{|F|}$ is of type 1 with respect to $\iota$, this involution acts as the translation by a $2$-torsion point in each fiber of $\varphi_{|F|}$. Hence it acts as a translation by a $2$-torsion point on the generic fiber, which clearly implies the presence of a $2$-torsion section for $\varphi_{|F|}$. In particular, we can write a (generalized) Weierstrass equation for $\varphi_{|F|}$ as follows:
\[
y^2=x(x^2+a(t)x+b(t)).
\]
Taking the quotient by the 2-torsion section $(x,y,t)=(0,0,t)$ yields a (generalized) Weierstrass equation for $\varphi_{|D|}$ (see \cite[Proposition 3.7]{ST}):
 \[
y^2=x(x^2-2a(t)x+(a(t)^2-4b(t)),
\]
which clearly admits a $2$-torsion section of the same shape.
For the statement on the fiber types, recall that fibers of type $II^*$ and $IV^*$ are incompatible with the presence of a $2$-torsion section (\cite[Table VII.2.6.]{Miranda}).
\end{proof}
By Prop. \ref{pro:2-torsion}, $\iota$ is a translation by a 2-torsion section, which we denote by $T$. The fiber types of $\varphi_{|D|}$ can be readily determined by the Weierstrass equation given in the proof. For completion, we present them below. We adopt the standard notation for the fiber components, namely, $\Theta_i$ denote the components and $\Theta_0$, the neutral component, i.e., the one met by the zero section.
\begin{corollary}
Let $X, \iota, q, Y, \varphi_{|F|}$, and $\varphi_{|D|}$  be as above. Then the fiber types of $\varphi_{|F|}$ and $\varphi_{|D|}$ are related as follows.
\small{
$$
\begin{array}{|c|c|c|c|c|c|c|c|}
\hline
F_t & III^*& I_{2n+1} & I_{2n} , T\cdot \Theta_0=1& I_{2n} , T\cdot \Theta_n=1 &  I^*_{2n+1} & I^*_{2n} , T \cdot \Theta_{\text{near}}=1& I^*_{2n} , T\cdot \Theta_{\text{far}}=1 
\\
\hline
D_{\psi(t)} & III^* & I_{4n+2} & I_{4n} & I_{n}  &  I^*_{4n+2} & I^*_{4n} & I^*_{n}
\\
\hline
\end{array}
$$}
\end{corollary}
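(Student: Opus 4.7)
The statement is a local one: it relates the Kodaira type of $F_t$ to that of $D_{\psi(t)}$ where, by the Weierstrass description in Proposition \ref{pro:2-torsion}, the map $q$ is (fiberwise) a $2$-isogeny given explicitly in affine coordinates by $(x,y)\mapsto (x+a(t)+b(t)/x,\ y-b(t)y/x^{2})$. So my plan is to run Tate's algorithm on both Weierstrass equations
\[
y^{2}=x\bigl(x^{2}+a(t)x+b(t)\bigr) \quad\text{and}\quad y^{2}=x\bigl(x^{2}-2a(t)x+(a(t)^{2}-4b(t))\bigr)
\]
and compare the outcomes. The key invariants are the local orders of vanishing $\alpha:=v_{t_{0}}(a),\ \beta:=v_{t_{0}}(b)$, and $\gamma:=v_{t_{0}}(a^{2}-4b)$; the discriminants are $\Delta=16b^{2}(a^{2}-4b)$ and $\Delta'=16(a^{2}-4b)^{2}b$, so $v(\Delta)=2\beta+\gamma$ and $v(\Delta')=2\gamma+\beta$.

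\textbf{Case analysis.} I will organize the cases by the Kodaira type of $F_{t}$ using the position of the $2$-torsion section $T=(0,0)$ relative to the components. For $F_{t}=I_{m}$, the section $T$ meets $\Theta_{0}$ iff $\beta=0$ (so $\gamma=m$), and meets $\Theta_{m/2}$ iff $\beta=m/2=\gamma$ (possible only when $m$ is even). Reading off $v(\Delta')$ immediately gives $D_{\psi(t)}=I_{4n+2}$ in the $I_{2n+1}$ case (where necessarily $T\cdot\Theta_{0}=1$), $I_{4n}$ when $T\cdot\Theta_{0}=1$ on an $I_{2n}$, and $I_{n}$ when $T\cdot\Theta_{n}=1$. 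For $F_{t}=I^{*}_{m}$, I perform a change of variables to put the equation in Tate form and inspect which of the three two-torsion points reduces to the ``near'' simple component (meeting the multiplicity-$2$ chain) versus the ``far'' simple component; this reads off from whether $b$ or $a^{2}-4b$ has the smaller valuation, giving the four listed entries after applying Tate to $\Delta'$. For $F_{t}=III^{*}$, the local Weierstrass data forces $\alpha\geq 2,\ \beta\geq 3$ with $v(\Delta)=9$, and a direct computation on $(-2a,a^{2}-4b)$ yields again $v(\Delta')=9$ with Kodaira type $III^{*}$, as there is no $2$-torsion section on any other Kodaira type of valuation $9$.

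\textbf{Main obstacle.} The genuinely non-routine part is the $I^{*}_{m}$ analysis, because the component group is $(\Z/2)^{2}$ for $m$ even and $\Z/4$ for $m$ odd, and the two ``ends'' of the Kodaira diagram are not distinguished by $v(\Delta')$ alone: one must track which component $T$ meets in order to say whether the quotient has an $I^{*}_{4n}$ or an $I^{*}_{n}$. I will resolve this by running Tate's algorithm explicitly after the translation $x\mapsto x-2a$ in the quotient equation, and checking at which step the algorithm terminates; concretely, the ``near'' position corresponds to $\gamma>\beta$ (so the quotient inherits the long chain of $(-2)$-curves and the valuation doubles to $4n+6$), whereas the ``far'' position corresponds to $\beta>\gamma$ (so the chain shortens, producing $I^{*}_{n}$ with $v(\Delta')=n+6$). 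Finally, I will note that the listed fibers are exhaustive because Proposition \ref{pro:2-torsion} excludes $II^{*}$ and $IV^{*}$ on both sides, and the remaining Kodaira types ($II,III,IV,I_{0}^{*}$) admit no non-trivial $2$-torsion section meeting a component different from $\Theta_{0}$ except in the cases recorded.
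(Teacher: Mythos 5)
Your approach is exactly the one the paper intends: the paper offers no written proof beyond the remark that the table "can be readily determined by the Weierstrass equation given in the proof" of the preceding proposition, and your plan of running Tate's algorithm on $y^2=x(x^2+a x+b)$ and on its $2$-isogenous quotient $y^2=x(x^2-2ax+(a^2-4b))$, comparing $v(\Delta)=2\beta+\gamma$ with $v(\Delta')=\beta+2\gamma$ and tracking which component $T=(0,0)$ meets, is the standard way to carry that out; your treatment of the $III^*$ and $I^*_m$ columns (near $\leftrightarrow\ \beta<\gamma$, far $\leftrightarrow\ \gamma<\beta$) is correct and internally consistent with the dual isogeny. One local slip: in the $I_{2n}$ case with $T\cdot\Theta_n=1$ you assert $\beta=m/2=\gamma$, which is impossible (it would give $v(\Delta)=3m/2\neq m$, additive reduction, and $v(\Delta')=3n$ rather than the claimed $I_n$); the correct valuations are $\beta=m/2=n$ and $\gamma=0$ (the node of the reduction sits at the origin, so $b$ vanishes while $a$, and hence $a^2-4b$, is a unit), from which $v(\Delta')=\beta+2\gamma=n$ does yield $I_n$ as in the table. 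With that correction the argument goes through.
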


We move to study fibrations of type 1 with respect to a given non-symplectic involution. In parallel to the above, we first obtain results on the structure of the Mordell--Weil group and inadmissible singular fibers. Finally, we provide the reader with references for an account of the admissible types of singular fibers. 

\begin{proposition}\label{non-symplectic_Type1}
Let $X$ be a K3 surface $\iota \in \mathrm{Aut}(X)$ a non-symplectic involution and $\varphi_{|F|}$ a fibration of type 1 with respect to $\iota$.
\begin{itemize}
\item[i)] If $\iota^*$ acts trivially on $NS(X)$, then $MW(\varphi_{|F|})\subset(\Z/2\Z)^2$;
\item[ii)] If $\iota$ fixes two smooth genus 1 curves, then $\varphi_{|F|}$ has at most 6 irreducible singular fibers and it admits no fibers of type $II^*, IV^*, II, III, IV, I_n$, for $n$ odd and $n>7$, and $I_m^*$, for $m$ odd and $m>5$ or $m$ even and $m>12$.
\end{itemize}
\end{proposition}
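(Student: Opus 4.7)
For part (i), the plan is to show that every section of $\varphi_{|F|}$ is a $2$-torsion section. A section $S$ is a $(-2)$-curve on the K3 and hence rigid in its linear equivalence class ($h^0(\mathcal{O}(S))=1$), so the hypothesis that $\iota^*$ acts trivially on $\NS(X)$ forces $\iota(S)=S$ as a set. Because $\varphi_{|F|}$ is of type 1, $\iota$ preserves every fiber, so for any $p\in S$ the point $\iota(p)$ lies in $S\cap F_{\varphi_{|F|}(p)}=\{p\}$, giving $\iota|_S=\mathrm{id}_S$. Applied to the zero section $O$, this means $\iota$ fixes the origin $O\cap F_t$ of every smooth fiber. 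On such a fiber $\iota|_{F_t}$ is non-trivial (otherwise $\iota$ would act as the identity on a dense subset of $X$) and cannot be a translation (else $\iota$ would be symplectic), hence it is the inversion with origin $O\cap F_t$, whose fixed points are exactly $F_t[2]$. Every section meets $F_t$ at a fixed point of $\iota|_{F_t}$, so every section is $2$-torsion and $\MW(\varphi_{|F|})\hookrightarrow F_t[2]\cong (\Z/2\Z)^2$.

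For part (ii), first observe that on each smooth fiber $F_t$ the involution $\iota|_{F_t}$ is non-trivial (arguing as above) and not a translation (since $\iota$ is non-symplectic on $X$), so it has exactly $4$ fixed points. These fixed points necessarily lie on $\Fix(\iota)=C_1\cup C_2$, so $(C_1+C_2)\cdot F=4$. The quotient $Y=X/\iota$ is a smooth rational surface with $\chi(Y)=\tfrac12(\chi(X)+\chi(\Fix(\iota)))=12$, and by Corollary \ref{cor: type1 on quotient} the induced map $\varphi_{|D|}\colon Y\to\mathbb{P}^1$ is a conic bundle. The Euler-characteristic identity $\chi(Y)=4+k$ for a conic bundle over $\mathbb{P}^1$ with $k$ reducible fibers yields $k=8$.

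The prohibited fiber types are excluded by a case-by-case analysis of the possible $\iota$-action on each Kodaira fiber, constrained by: \textbf{(A)} no component of a fiber is pointwise fixed by $\iota$ (because $\Fix(\iota)$ contains only elliptic components), so on each component preserved setwise $\iota$ acts as a non-trivial involution of $\mathbb{P}^1$ with exactly two fixed points; \textbf{(B)} any intersection point between two components both preserved setwise is automatically $\iota$-fixed; and \textbf{(C)} $(C_1+C_2)\cdot F_t=4$. For $II^*$ (resp.\ $IV^*$), the dual graph $\tilde E_8$ (resp.\ $\tilde E_6$) has trivial automorphism group, so $\iota$ preserves every component setwise; but the degree-three branch vertex then carries three $\iota$-fixed intersection points lying on a single $\mathbb{P}^1$, contradicting (A). Analogous combinatorial inspections, combined with compatibility with the conic-bundle quotient on $Y$, rule out $II$, $III$, $IV$ and impose the stated upper bounds on the admissible $I_n$ and $I_m^*$.

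Finally, for the bound of at most six irreducible singular fibers (all of type $I_1$, since $II$ is excluded) the key tool is Riemann--Hurwitz applied to the restriction $\varphi_{|F|}|_{C_1\cup C_2}\colon C_1\cup C_2\to\mathbb{P}^1$: this is a degree $d_1+d_2=4$ map from a disjoint union of two genus-$1$ curves to $\mathbb{P}^1$, with total ramification degree $2(d_1+d_2)=8$. Each $I_1$ fiber has its node on one of the $C_i$ and contributes a positive amount of ramification, and the reducible singular fibers of $\varphi_{|F|}$ arising from the $8$ reducible conic fibers on $Y$ also consume part of this budget. The main obstacle, and the heart of the proof, is the precise ramification bookkeeping combined with the exhaustive fiber-by-fiber $\iota$-action analysis needed both to exclude all the prohibited fiber types and to extract the tight bound of $6$ on the number of $I_1$ fibers.
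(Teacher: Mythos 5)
Your argument for part (i) is correct and is essentially the standard one behind the references the paper cites for this item ([O, Lemma 2.4] and [GSclass, Prop.~2.5]): rigidity of the $(-2)$-sections plus triviality of $\iota^*$ forces every section to be pointwise fixed, so $\iota$ restricts to the inversion on the generic fiber and every section passes through the $2$-torsion. (Minor remark: once $\iota|_{F_t}$ fixes the origin, it cannot be a translation for that reason alone; no appeal to symplecticity is needed there.)

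Part (ii) is where the genuine gaps lie; note the paper does not prove it either but defers to [GSconcis, Remarks 5.6 and 5.7], where the statement is extracted from describing a type-1 fibration as the double cover, branched at the $4$ points of $(C_1\cup C_2)\cap F_t$, of the fibers of a conic bundle on the quotient rational elliptic surface. Two concrete problems with your version. First, your exclusion of $IV^*$ rests on the claim that the dual graph $\tilde E_6$ has trivial automorphism group; this is false (its automorphism group is $S_3$), so $\iota$ may swap two arms, in which case the central component carries only one forced fixed intersection point and the contradiction with your constraint (A) evaporates. The case is still excluded, but only after combining ``every fixed point of $\iota$ lies on $C_1\cup C_2$'' with the fiber multiplicities ($3\Theta_0+2(\cdots)+\cdots$) against $(C_1+C_2)\cdot F_t=4$; and the remaining exclusions ($II$, $III$, $IV$, the bounds on $I_n$ and $I_m^*$) are only asserted, not carried out. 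Second, and more seriously, your ramification budget does not produce the bound $6$. One checks $F\cdot C_i=2$ for each $i$, so each $C_i\to\mathbb{P}^1$ is a degree-$2$ map from a genus-$1$ curve with exactly $4$ simple ramification points, $8$ in total; the node of an $I_1$ fiber is an $\iota$-fixed point, hence lies on exactly one $C_i$, where the local intersection multiplicity with the fiber is $2$ and consumes exactly one unit of ramification. As set up, this gives at most $8$ irreducible singular fibers, not $6$, and nothing in your sketch (the $8$ extra components of the conic bundle on $Y$ sit over other fibers and need not interact with the tangency count) closes that gap. You defer precisely this bookkeeping to ``the heart of the proof,'' but that bookkeeping \emph{is} the content of the statement, so part (ii) remains unproved as written.
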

\begin{proof}
For part 1), see \cite[Prop. 2.5]{GSclass} and \cite[Lemma 2.4]{O}. Part ii) follows from \cite[Remarks 5.6 and 5.7]{GSconcis}.
\end{proof}

\begin{remark}
The reader will find a table for all admissible singular fibers of $\varphi_{|F|}$ in \cite[Table 1]{GSconcis}. This is also under the hypothesis as in ii) Prop. \ref{non-symplectic_Type1}.
\end{remark}

\begin{remark}
The reader might notice the presence of extra assumptions on $\mathrm{Fix}(\iota)$ and on the action of $\iota^*$ on $\mathrm{NS}(X)$ in the statements for the non-symplectic case. These were not present in the symplectic case as there the fixed locus is always given by 8 points and $\iota^*$ never acts trivially on $\mathrm{NS}(X)$.
\end{remark}

\subsection{Non-symplectic involutions acting trivially on $\mathrm{NS}(X)$.}\label{subsec: 2-elementary}
The assumption that the action of $\iota^*$ on $\mathrm{NS}(X)$ is trivial simplifies the study of elliptic fibrations on $X$. Indeed, not only do we have the previous results at hand, but moreover, there are no fibrations of type 3 with respect to $\iota$. We summarize our results on such special K3 surfaces in what follows.

\begin{proposition}\label{prop: 2 elementary 0}
Let $X$ be a K3 surface. Then there is a non-symplectic involution $\iota\in \mathrm{Aut}(X)$ such that $\iota^*$ acts trivially on $\mathrm{NS}(X)$ if, and only if, $\mathrm{NS}(X)$ is $2$-elementary. In that case, $X$ does not admit elliptic fibrations of type 3 with respect to $\iota$, and all elliptic fibrations on $X$ are classified.
\end{proposition}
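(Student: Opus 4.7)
The plan is to prove the three assertions of the proposition in turn, using Nikulin's theory of non-symplectic involutions for the equivalence, a direct linear-equivalence argument for the non-existence of type 3 fibrations, and an appeal to the existing literature (in particular \cite{GSclass}) for the classification.

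For the equivalence in the first sentence, I would invoke Nikulin's foundational results on non-symplectic involutions. For the forward direction, suppose $\iota$ is a non-symplectic involution with $\iota^{*}$ trivial on $\mathrm{NS}(X)$. Because $\iota$ is non-symplectic and of order 2, it acts as $-1$ on the transcendental lattice $T_X$, so $\mathrm{NS}(X)$ coincides with the invariant lattice $H^{2}(X,\Z)^{\iota^*}$. Nikulin showed that such an invariant lattice inside the unimodular lattice $H^{2}(X,\Z)$ is necessarily 2-elementary, which gives the claim. For the converse, if $\mathrm{NS}(X)$ is 2-elementary then Nikulin's existence theorem produces an involution of $H^{2}(X,\Z)$ acting as $+1$ on $\mathrm{NS}(X)$ and $-1$ on $T_{X}$; the surjectivity of the period map and the strong Torelli theorem ensure that this Hodge isometry is induced by an actual involution $\iota \in \mathrm{Aut}(X)$, which is non-symplectic because $\iota^{*}$ is $-1$ on the period.

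For the second assertion (absence of type 3 fibrations), the argument is very short and is the conceptual heart of the statement. Suppose $\varphi_{|F|}$ is of type 3 with respect to $\iota$, so that $\iota(F) = G$ for some genus 1 curve $G$ with $F \cdot G = m > 0$ (see Corollary \ref{cor:type3}). Since $\iota^{*}$ acts trivially on $\mathrm{NS}(X)$, we have $[G] = \iota^{*}[F] = [F]$, so $G$ belongs to the linear system $|F|$. But two distinct members of a genus 1 pencil are disjoint, whence $F \cdot G = F^{2} = 0$, contradicting $m > 0$.

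For the final assertion, I would simply invoke the complete classification of elliptic fibrations on K3 surfaces with 2-elementary N\'eron--Severi lattice developed in \cite{GSclass} (together with earlier partial results in \cite{O}, \cite{Kl}, \cite{CG}). By parts (1) and (2) every elliptic fibration on such an $X$ is of type 1 or type 2 with respect to $\iota$, and the frame lattices in each type were enumerated there using Nishiyama's lattice-theoretic method combined with the constraints of Propositions \ref{pro:2-torsion}--\ref{non-symplectic_Type1} and Corollary \ref{cor: type 2 with section}. The main conceptual obstacle in this proof is really the first part: one needs the full strength of Nikulin's theory (including the Torelli theorem) to pass from the lattice condition to the existence of the geometric involution, whereas parts (2) and (3) are essentially a one-line intersection computation and a citation, respectively.
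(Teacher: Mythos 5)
Your proposal is correct and follows essentially the same route as the paper: the equivalence is Nikulin's classical result (cited in the paper via \cite{Nikulin} and \cite{AST}, which you usefully unpack via the $-1$ action on $T_X$ and the Torelli theorem), the absence of type 3 fibrations is the one-line observation that $\iota^*[F]=[F]$ forces $\iota(F)\in|F|$, and the classification is a citation of \cite{O}, \cite{Kl}, \cite{CG} and \cite{GSclass}. The only content of the paper's proof not reflected in yours is the bookkeeping of which invariants $(r,a,\delta)$ of the 2-elementary lattice correspond to which reference, but this does not affect correctness.
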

\begin{proof}
The first assertion is classic and proved by Nikulin in \cite{Nikulin} (see also \cite[Thm. 0.1]{AST}). The nonexistence of fibrations of type 3 follows directly from its definition. Finally, the classification of elliptic fibrations on such surfaces is given in a series of papers by different authors. More precisely, the 2-elementary lattices that can be realized as the  N\'eron--Severi lattice of a K3 are determined by 3 invariants, namely, $r$, its rank, $a$, its length, and $\delta$, which encodes information on the discriminant form. If $(r,a,\delta)=(18,4,0)$, the K3 surface is the Kummer surface of two non-isogenous elliptic curves and the classification was given by Oguiso in \cite{O}. If $(r,a,\delta)=(16,6,1)$, the K3 surface is the double cover of the plane branched along 6 lines in general position, and the classification was given by Kloosterman in \cite{Kl}. More generally, if $r+a=22$, then the fixed locus of the involution consists only of rational curves and the classification was given in \cite{CG}. The remaining cases are contained in \cite{GSclass}. We observe that the fixed locus contains a curve of genus $g=\frac{22-r-a}{2}$. In particular, if $r+a=22, \text{ or }20$, then the K3 can be realized as a double cover of a RES, branched along two fibers, which might be reducible or not, depending on $r+a$. \end{proof}

\begin{proposition}\label{prop:2-elementary}
Let $X$ be a K3 surface. Assume that $\mathrm{NS}(X)$ is 2-elementary. Then there is an involution $\sigma \in \mathrm{Aut}(X)$ such that there are no elliptic fibrations on $X$ of type 3 with respect to $\sigma$, and every elliptic fibration of type 1 has a finite Mordell--Weil group. In particular, if there is an elliptic fibration with infinite Mordell--Weil group then it is of type 2 with respect to $\sigma$.
\end{proposition}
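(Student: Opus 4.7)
The plan is to take $\sigma$ to be the non-symplectic involution on $X$ with trivial action on $\mathrm{NS}(X)$, whose existence is ensured by Proposition \ref{prop: 2 elementary 0} precisely because $\mathrm{NS}(X)$ is $2$-elementary. With this choice of $\sigma$, each of the three assertions of the proposition becomes a short consequence of results already established in the paper, so the argument is essentially an assembly of Propositions \ref{prop: 2 elementary 0} and \ref{non-symplectic_Type1}.

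First I would verify the absence of fibrations of type 3 with respect to $\sigma$. Let $\varphi_{|F|}$ be any elliptic fibration on $X$ with smooth generic fiber $F$. Since $\sigma^*$ acts trivially on $\mathrm{NS}(X)$, the classes satisfy $[\sigma(F)] = [F]$, so $\sigma(F)$ lies in the linear system $|F|$ and is therefore itself a fiber of $\varphi_{|F|}$. By Definition \ref{def: class}, this forces $\varphi_{|F|}$ to be of type 1 or of type 2 with respect to $\sigma$ and rules out type 3.

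Next, to show that any fibration of type 1 with respect to $\sigma$ has finite Mordell--Weil group, I would invoke directly part i) of Proposition \ref{non-symplectic_Type1}, which gives $\mathrm{MW}(\varphi_{|F|}) \subset (\Z/2\Z)^2$ under the hypothesis that $\sigma^*$ acts trivially on $\mathrm{NS}(X)$. Since $(\Z/2\Z)^2$ is finite, the conclusion follows.

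The final clause is then immediate by elimination: an elliptic fibration with infinite Mordell--Weil group cannot be of type 1 (its Mordell--Weil would be finite) nor of type 3 (no such fibrations exist with respect to $\sigma$), hence it must be of type 2. The main obstacle here is really only presentational, since the technical work has been absorbed into the two preceding propositions; there is no hidden difficulty to overcome beyond picking the correct $\sigma$ and quoting the right statements in the right order.
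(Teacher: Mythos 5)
Your proof is correct and follows essentially the same route as the paper: both choose $\sigma$ to be the involution acting trivially on $\mathrm{NS}(X)$ provided by Proposition \ref{prop: 2 elementary 0} and then deduce the three claims from that triviality. The only (cosmetic) difference is that you quote Proposition \ref{non-symplectic_Type1}\,i) for the finiteness of the Mordell--Weil group of a type 1 fibration, whereas the paper re-derives it directly by noting that a section of infinite order could not be preserved by $\sigma$ --- the same underlying fact.
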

\begin{proof}
Under the hypothesis, there exists an involution $\sigma$ such that $\sigma^*$ acts trivially on $\mathrm{NS}(X)$ (as in Prop. \ref{prop: 2 elementary 0}). If $\nu$ is an elliptic fibration of type 1 with a section $C$ of infinite order then $C$ is not preserved by $\sigma$ which contradicts the triviality of $\sigma^*$. 
\end{proof}

\section{K3s that are base change of RES}\label{K3 fromRES}

In the previous sections, our input data has been that of a K3 surface with an involution. In our series of papers \cite{GSconcis}, \cite{GSclass}, \cite{WINEequ} and \cite{WINEfield} we adopted a different viewpoint by considering a pair $(X,R)$, with $R$ a RES and $X$ a K3 surface obtained as a double cover of $R$. This is equivalent to fixing a K3 surface with a non-symplectic involution whose fixed locus is composed of curves of genus at most 1.

In what follows, we give a brief account of our contributions and place them in the light of the previous sections. 

Let $\mathcal{E}_R: R \rightarrow B_0=\mathbb{P}^1$ be a RES and $\psi: B=\mathbb{P}^1  \rightarrow  B_0$, a degree 2 morphism. We have the following commutative diagram:

\begin{equation}\label{diag base chage}\xymatrix{ X \ar[r]\ar[dr]_{\mathcal{E}_X} & R\times_{\psi}\mathbb{P}^1 \ar[rr]^q_{2:1} \ar[d] &&R \ar[d]^{\mathcal{E}_R}  \\ & \mathbb{P}^1 \ar[rr]^{\psi}_{2:1}&& \mathbb{P}^1}
\end{equation}
where $X$ is a minimal model of $R\times_{\varphi}\mathbb{P}^1$. If  $\psi$ is branched over reduced fibers of $\mathcal{E}_R$, then $X$ is a K3 surface, and in that case, it is the minimal model of $R\times_{\psi}\mathbb{P}^1$. We assume that this is the case from now on.

The elliptic fibration $\mathcal{E}_R$ naturally induces an elliptic fibration on $X$, which is denoted by $\mathcal{E}_X$ above and in what follows. 

\begin{remark}
If the branch fibers of $\psi$ are smooth then $R \times_{\psi} \mathbb{P}^1$ is smooth and $X$ coincides with it.
\end{remark}

The degree 2 map $\psi: B\rightarrow B_0$ induces an involution on $B$ which lifts to the deck involution of the cover $X\rightarrow R$. It clearly preserves the class of a fiber of $\mathcal{E}_X$ in $\mathrm{NS}(X)$. We denote this involution by $\iota$ in what follows. 

\textbf{Context:} Let $X$ be a K3 surface constructed as above and $\iota$ be the deck involution. In what follows, we analyze the elliptic fibrations on $X$ with respect to $\iota$.

We start by observing that $\mathrm{Fix}(\iota)$ is contained in the union of the two fibers over the ramification points of $\psi$. Hence it consists of a union of curves of genus at most 1. In particular, a genus 1 curve contained in it must be a fiber of $\mathcal{E}_X$.

\begin{corollary}
The elliptic fibration $\mathcal{E}_X$ is of type 2 with respect to $\iota$. Moreover, if $\mathrm{Fix}(\iota)$ contains at least one smooth fiber of $\mathcal{E}_X$, then $\mathcal{E}_X$ is the unique fibration of type 2 with respect to $\iota$.
\end{corollary}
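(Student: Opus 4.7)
The plan is to prove the two claims in turn. The first is essentially structural. Diagram \eqref{diag base chage} exhibits $\mathcal{E}_X$ as the pullback of $\mathcal{E}_R$ under $\psi$, so the deck involution $\iota$ covers the deck involution $\iota_B$ of $\psi$; in particular $\iota$ permutes the fibers of $\mathcal{E}_X$ by $\iota(F_t) = F_{\iota_B(t)}$. Since $\iota_B$ is a non-trivial involution of $\mathbb{P}^1$, for generic $t$ one has $\iota_B(t) \neq t$, which is exactly the condition defining a fibration of type 2 in Definition \ref{def: class}. The two fibers of $\mathcal{E}_X$ preserved by $\iota$ are those over the ramification points of $\psi$.

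For uniqueness, let $F_X \subset \mathrm{Fix}(\iota)$ be a smooth fiber of $\mathcal{E}_X$, and suppose that $\varphi_{|F'|}$ is another elliptic fibration on $X$ of type 2 with respect to $\iota$. The key step is to show that $n := F_X \cdot F' = 0$. Assume for contradiction that $n > 0$. Since $\varphi_{|F'|}$ is of type 2, $\iota$ acts on its base as an involution fixing only two points, so one can choose a parameter $s$ with $\iota(s) \neq s$. Any point $P \in F_X \cap F'_s$ is fixed by $\iota$, because $P \in F_X \subseteq \mathrm{Fix}(\iota)$; on the other hand $\iota(P) \in \iota(F'_s) = F'_{\iota(s)}$, so $P$ lies simultaneously in $F'_s$ and in $F'_{\iota(s)}$. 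This contradicts the fact that distinct fibers of the morphism $\varphi_{|F'|}$ are disjoint. Hence $F_X \cdot F' = 0$.

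To conclude, I would invoke the Hodge index theorem. The classes $[F_X]$ and $[F']$ in $\mathrm{NS}(X)$ are both isotropic and, by the preceding step, mutually orthogonal, so they span a totally isotropic sublattice of $\mathrm{NS}(X)$. Since the intersection form on $\mathrm{NS}(X)$ has signature $(1, \rho(X) - 1)$, any totally isotropic sublattice has rank at most one, forcing $[F_X]$ and $[F']$ to be proportional over $\mathbb{Q}$. Both classes are primitive: a section of each elliptic fibration satisfies $S \cdot F = 1$, and combined with effectiveness this yields $[F_X] = [F']$, hence $|F_X| = |F'|$ and $\varphi_{|F'|} = \mathcal{E}_X$. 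I anticipate no serious obstacle: the one point requiring a pinch of care is simply exhibiting a value of $s$ with $\iota(s) \neq s$, which is immediate from the type 2 hypothesis.
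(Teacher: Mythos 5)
Your argument is correct, but the uniqueness part takes a genuinely different route from the paper's. The paper argues directly and geometrically: for any fibration $\nu$ of type 2 with respect to $\iota$, the involution acts on the base of $\nu$ fixing exactly two points, so $\mathrm{Fix}(\iota)$ is contained in the union of the two corresponding fibers; a smooth genus 1 curve contained in a fiber of a genus 1 fibration is the whole fiber, so the fixed smooth fiber of $\mathcal{E}_X$ is also a fiber of $\nu$, and two genus 1 fibrations sharing a fiber coincide. You instead establish $F_X\cdot F'=0$ (via the same fixed-point observation, correctly executed) and then pass through the Hodge index theorem, primitivity, and effectiveness to get $[F_X]=[F']$. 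Both are valid, but note that your intermediate step already finishes the job: an irreducible curve with zero intersection against the fiber class of $\varphi_{|F'|}$ is vertical, and a smooth genus 1 vertical curve is an entire fiber, so the Hodge-index detour can be skipped. One small caveat: you justify primitivity of the fiber classes by invoking a section with $S\cdot F=1$, whereas Definition \ref{def: class} does not assume the competing type 2 fibration has a section. This is harmless here -- on a K3 surface genus 1 fibrations have no multiple fibers, so the class of a smooth fiber is automatically primitive -- but the justification should be phrased that way rather than via a section that may not exist.
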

\begin{proof}
Let $F$ be the smooth fiber of $\mathcal{E}_X$ contained in the fixed locus. If $\nu:X\ra\mathbb{P}^1$ is a fibration of type 2 with respect to $\iota$, the fixed locus of $\iota$ is contained in fibers of $\nu$ and if it contains a genus 1 curve, then it is a fiber of the fibration. This implies that $\mathcal{E}_X$ and $\nu$ coincides, since they share a fiber.  
	\end{proof}

If $\mathrm{Fix}(\iota)$ contains only rational curves, then there could be other elliptic fibrations of type 2 with respect to $\iota$. This gives other realizations of $X$ as a base change of a RES. Indeed, in that case, $R \times_{\psi} \mathbb{P}^1$ is singular and $X$ is a blow-up of $R\times_{\psi} \mathbb{P}^1$. Hence there is a commutative diagram

\begin{equation}\label{diag quotient} \xymatrix{ X \ar[r]^{f}\ar[d]_{q}& R\times_{\psi}\mathbb{P}^1 \ar[d]  \\Y = X / \iota \ar[r]& R}
\end{equation}
where $q$ is the quotient map, $f$ is a birational morphism and $Y$ is a rational surface with $K_Y^2<0$. In particular,  being a blow-up of $R$, the surface $Y$ might have more than one non-relatively minimal elliptic fibrations. In that case, there is more than one way to realize $X$ as a double cover of a RES and, by construction, each non-relatively minimal elliptic fibration on $Y$ yields a fibration of type 2 on $X$. 
In what follows, we denote by $\tilde{f}$ and $\tilde{q}$ the maps in the commutative diagram (\ref{diag quotient}) such that $\tilde{q}\circ f = \tilde{f} \circ q $.

\subsection{Fibrations of type 1}\label{subsec: type 1 on RES}

There are always fibrations of type 1 on K3 surfaces that arise as a double cover of a RES. Indeed, as discussed in Example \ref{example: conics on  RES}, RESs admit conic bundles, and their pull-back yield elliptic fibrations on the K3 surface. The following is a reciprocal of Corollary \ref{cor: type1 on quotient} restricted to our context of K3 surfaces that are base change of RES.
\begin{proposition}\label{prop: type 1 CB}
Let $Y$ be as above and $|D|$ a conic bundle on $Y$.  Then $|q^{-1}(D)|$ is a genus 1 fibration on $X$. Moreover, it is of type 1.
\end{proposition}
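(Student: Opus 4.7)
The plan is to reduce the claim to an adjunction and pull-back computation, and then use the fact that $\iota$ is the deck involution of $q$ to conclude the type 1 property almost for free.

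First, I will work with the divisor $F := q^{*}D$ on $X$. Because $q$ has degree $2$, $F^{2} = 2D^{2} = 0$, and since $D$ is a (nef) fiber class on $Y$, $F$ is nef on $X$. To identify the arithmetic genus of a generic element of $|F|$, I would take a smooth general fiber $D_{t}$ of the conic bundle on $Y$ (so $D_{t} \cong \mathbb{P}^{1}$) and describe $q^{*}D_{t}$ as a double cover of $\mathbb{P}^{1}$ branched along $D_{t} \cap B$, where $B \subset Y$ is the branch curve of $q$. Using $K_{X} = 0$ together with $2R = q^{*}B$ and $K_{X} = q^{*}K_{Y} + R$, one gets $B = -2K_{Y}$, and hence $B \cdot D = -2K_{Y} \cdot D = 4$ by the defining property of a conic class. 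So $q^{*}D_{t}$ is a smooth double cover of $\mathbb{P}^{1}$ branched in $4$ points, i.e. a smooth genus $1$ curve; equivalently, adjunction on $X$ gives $2p_{a}(F) - 2 = F^{2} + F \cdot K_{X} = 0$.

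Next, I would argue that $|F|$ is a genus $1$ fibration on $X$. Since $F$ is nef with $F^{2} = 0$ and $F$ is represented by the smooth genus $1$ curve $q^{*}D_{t}$, Riemann--Roch on the K3 surface $X$ gives $h^{0}(F) \geq 2$, and the smoothness (hence connectedness) of the general member forces $\dim |F| = 1$ with no fixed part or base points. This is exactly the pullback of the conic bundle on $Y$, and by construction $\varphi_{|F|} = \varphi_{|D|} \circ q$.

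For the type 1 assertion, the morphism $\varphi_{|F|}$ factors through $q$, so $\iota$ preserves every fiber of $\varphi_{|F|}$ set-theoretically: for any $t$, the fiber $F_{t} = q^{-1}(D_{t})$ satisfies $\iota(F_{t}) = F_{t}$, because $\iota$ is the deck involution of $q$. By Definition \ref{def: class}, this is precisely what it means for $|F|$ to be of type 1 with respect to $\iota$.

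The only step that requires some care is verifying that $q^{*}D_{t}$ is genuinely a smooth connected curve for a general $t$ (rather than splitting as $F_{1} + \iota^{*}F_{1}$, which would correspond to a conic bundle on $Y$ all of whose fibers sit inside the image of components of $\mathrm{Fix}(\iota)$). This is ruled out by the computation $B \cdot D = 4 > 0$, which shows that the general $D_{t}$ meets the branch locus and hence is not entirely contained in it; together with the connectedness of a double cover of $\mathbb{P}^{1}$ with nonempty branch, this rules out any splitting of the general preimage.
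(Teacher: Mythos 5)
Your proof is correct, and the first half (that $|q^{*}D|$ is a genus~1 fibration) follows essentially the same line as the paper: $q^{*}(D)^{2}=0$ and adjunction with $K_X=0$ give arithmetic genus $1$; your extra verification via $B\sim -2K_Y$, $B\cdot D=4$ and the branched-double-cover description of the general fiber is a welcome refinement that also settles irreducibility of the general member, which the paper leaves implicit. Where you genuinely diverge is the type~1 assertion. The paper argues by elimination: by Corollary \ref{cor:type2} a fibration of type 2 would induce a \emph{genus 1} fibration on $Y$, and by Corollary \ref{cor:type3} a fibration of type 3 would force $\dim|D|>1$; since $|D|$ is a pencil of rational curves, only type 1 remains. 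You instead argue directly that $\varphi_{|q^{*}D|}$ factors through $q$, so every fiber is a full $q$-preimage and hence stable under the deck involution $\iota$, which is exactly Definition \ref{def: class}(1). Your direct argument is arguably cleaner and self-contained (it does not need the earlier corollaries), at the cost of having to justify that $|q^{*}D|$ really equals $q^{*}|D|$, i.e.\ that $h^{0}(X,q^{*}D)=2$ --- which you do handle via Riemann--Roch and the irreducibility of the general member. Both routes are sound.
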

\begin{proof}
It is enough to notice that $q^*(D)^2=D^2=0$. By adjunction $q^*(D)$ has genus 1 and its class spans a genus 1 fibration on $X$. It is of type 1 by Corollaries \ref{cor:type2} and \ref{cor:type3}.
 \end{proof}

If $D \subset Y$ is a genus zero curve such that $D^2=0$ and $D\cdot (-K_{Y})=2$, then $D$ is a conic class in $Y$ and hence it endows $Y$ with a conic bundle structure. The linear system $|\tilde{f}(D)|$ in $R$ has base points if, and only if, $D$ intersects at least one of the exceptional curves of the blow-up map $\tilde{f}$. In this case, $|\tilde{f}(D)|$ is clearly not a conic bundle on $R$. If the latter holds, we called the divisor class $\tilde{f}(D)$ in $R$ a \emph{generalized conic bundle} in \cite{GSconcis},  to stress the fact that $D$ is a conic bundle only on $Y$. We reformulate Proposition \ref{prop: type 1 CB} and its reciprocal in that language. 

\begin{proposition} \cite[Proposition 3.8 and Theorem 4.2]{GSconcis}
Let $X$, $R$ and $\iota$ be as above. Let $\pi: X \rightarrow \mathbb{P}^1$ be an elliptic fibration. Then $\pi$ is of type 1 with respect to $\iota$ if, and only if, $|\tilde{f}\circ q (\pi^{-1}(t))|$ is a conic bundle or a generalized conic bundle on $R$. 

\end{proposition}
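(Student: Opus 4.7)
My plan is to factor the claimed equivalence through the intermediate quotient $Y$, and reduce it to showing that $\pi$ is of type 1 with respect to $\iota$ if and only if $q(F)$ is a conic class on $Y$, where $F=\pi^{-1}(t)$ denotes a general fiber. Once this reduction is carried out, the passage between $Y$ and $R$ via the birational morphism $\tilde{f}: Y \to R$ is essentially bookkeeping: by the definition recalled in the paragraph preceding the proposition, a conic class $D$ on $Y$ pushes forward to a genuine conic bundle on $R$ when $D$ is disjoint from the exceptional divisor of $\tilde{f}$, and to a generalized conic bundle otherwise.

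For the forward direction, assume $\pi$ is of type 1. Since in the setting of Section \ref{K3 fromRES} the quotient $Y$ is rational, Corollary \ref{cor: type1 on quotient} gives that $|q(F)|$ is a conic bundle on $Y$; that is, $q(F)^2 = 0$ and $q(F)\cdot (-K_Y) = 2$. Distinguishing whether or not $q(F)$ meets the exceptional divisor of $\tilde{f}$, either $\tilde{f}(q(F))$ remains a conic class on $R$ (via the projection formula and $\tilde{f}^*(-K_R) = -K_Y + \sum E_i$, where the $E_i$ are the exceptional curves of $\tilde f$) or it is a generalized conic bundle on $R$ by definition.

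For the reverse direction, assume $|\tilde{f}(q(F))|$ is a (generalized) conic bundle on $R$. In the generalized case, the definition directly provides that $q(F)$ is a conic class on $Y$. In the genuine case, I invoke the blow-up formulas $\tilde{f}_*(q(F))^2 = q(F)^2 + \sum m_i^2$ and $\tilde{f}_*(q(F))\cdot (-K_R) = q(F)\cdot (-K_Y) + \sum m_i$, with $m_i = q(F)\cdot E_i \geq 0$, together with $q(F)^2 \geq 0$ (which holds because $q(F)$ is the image of the moving divisor $F$, hence itself moves in a non-trivial linear system), to force $m_i = 0$ for every $i$, $q(F)^2 = 0$ and $q(F)\cdot (-K_Y) = 2$. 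In either case $q(F)$ is a conic class on $Y$, so $g(q(F)) = 0$ by adjunction; Proposition \ref{prop:g(D)}, combined with the fact that $\iota$ is non-symplectic with non-empty fixed locus (it is the deck involution of the double cover $X\to R$), then implies that $\iota$ preserves every member of $|F|$, that is, $\pi$ is of type 1. The main technical point is the blow-up bookkeeping in the genuine reverse case, where one must carefully relate intersection numbers on $R$ to those on $Y$ and exploit the nonnegativity of $q(F)^2$.
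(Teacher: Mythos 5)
Your proof is correct and follows essentially the route the paper intends: the forward direction is Corollary \ref{cor: type1 on quotient} (with $Y$ rational because $\iota$ is the non-symplectic deck involution with non-empty fixed locus), the reverse direction is the content of Proposition \ref{prop: type 1 CB} combined with Proposition \ref{prop:g(D)}, and the passage between $Y$ and $R$ is the definitional bookkeeping for (generalized) conic bundles. The paper gives no separate argument here --- it presents the statement as a reformulation of Proposition \ref{prop: type 1 CB} and its reciprocal, citing \cite{GSconcis} --- so your explicit blow-up computation with the multiplicities $m_i$, together with the observation that $q(F)^2\geq 0$ (valid since the curves $q(F_t)$ form an algebraic family covering $Y$) is what forces $m_i=0$, is a correct filling-in of details the paper leaves implicit.
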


\subsection{Fibrations of type 2}\label{subsec: type 2 on RES}
While $R$ admits only one elliptic fibration, say $\pi$, the quotient surface $Y$, if different from $R$, might admit several genus 1 fibrations with a section that are not relatively minimal. Given such a genus 1 fibration $\nu: Y \rightarrow \mathbb{P}^1$ different from $\pi$, the blow-down of $Y$ to a surface on which $\nu$ is relatively minimal, say $R'$, produces a different realization of $X$ as a double cover of a different RES. Similarly to what happens for generalized conic bundles, the curves $\tilde{f}(\nu^{-1}(s))$ span a linear system that has base points on $R$ and in particular is not a fibration. In \cite{GSconcis} and \cite{GSclass} we called such linear systems \emph{splitting genus 1 fibrations} on $R$.

We can now reformulate Corollary \ref{cor:type2} and its reciprocal in this language.
\begin{proposition}
Let $X$, $R$ and $\iota$ be as above. Let $\nu: X \rightarrow \mathbb{P}^1$ be an elliptic fibration. Then $\nu	$ is of type 2 with respect to $\iota$ if, and only if, the linear system  $|\tilde{f}\circ q(\nu^{-1}(s))|$ is a (splitting) genus 1 fibration on $R$. 
\end{proposition}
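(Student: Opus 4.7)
The plan is to prove both implications by tracking a general fiber of $\nu$ through the diagram (\ref{diag quotient}) and comparing its image in $Y$ and in $R$.

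For the forward direction, assume $\nu$ is of type $2$ with respect to $\iota$. By Corollary \ref{cor:type2}, the divisor $D=q(\nu^{-1}(s))$ (or $B=\beta^*D$ in the symplectic case, but here $\iota$ is non-symplectic since it is a deck involution with fixed locus) spans a genus $1$ fibration $\varphi_{|D|}:Y\to\mathbb{P}^1$. I would then push $D$ forward through the birational morphism $\tilde{f}:Y\to R$. Since $\tilde{f}$ contracts only $(-1)$-curves inside the blown-up locus, and $D^2=0$, the image $\tilde{f}(D)$ is a curve of arithmetic genus $1$ on $R$. If $D$ is disjoint from all exceptional divisors of $\tilde{f}$, then $|\tilde{f}(D)|$ defines an honest genus $1$ fibration on $R$; otherwise, $|\tilde{f}(D)|$ acquires base points coming from the image in $R$ of the exceptional locus meeting $D$, and by definition this is precisely a splitting genus $1$ fibration. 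Either way, the assertion is established.

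For the converse, suppose $|\tilde{f}\circ q(\nu^{-1}(s))|$ is a (splitting) genus $1$ fibration on $R$. I would first pull back via $\tilde{f}^{-1}$ (resolving the base points if needed by the blow-ups realizing $Y\to R$) to produce a genus $1$ fibration $\varphi_{|D|}:Y\to\mathbb{P}^1$, where $D=q(\nu^{-1}(s))$. Since the quotient map $q:X\to Y$ is a degree $2$ map with $q^*D$ proportional to the fiber class of $\nu$, the fibration $\nu$ factors through the composition $\varphi_{|D|}\circ q$ via the degree $2$ base map $\psi$. In particular, $\iota$ permutes the fibers of $\nu$ nontrivially on the base, while preserving the fiber class; by Definition \ref{def: class}, this says precisely that $\nu$ is of type $2$ with respect to $\iota$. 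Type $1$ is excluded because then $\nu$ would descend to a fibration on $Y$ with the same base, contradicting the degree $2$ base map; and type $3$ is excluded because the image class $D$ on $Y$ has $D^2=0$ rather than $D^2>0$ as in Corollary \ref{cor:type3}.

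The main subtlety I expect is a careful treatment of the base points of $|\tilde{f}\circ q(\nu^{-1}(s))|$ in the splitting case: one needs to verify that after resolving these base points through the exceptional curves of $\tilde{f}$ on $Y$, the proper transform is genuinely the fiber of a morphism $Y\to\mathbb{P}^1$ (equivalently, that $D^2=0$ on $Y$ and that $D$ is nef). This is where the hypothesis that the base locus on $R$ arises only from curves that become $(-1)$-curves in $Y$ is crucial, and it is exactly what the definition of splitting genus $1$ fibration in \cite{GSconcis, GSclass} is designed to encode, so once the definitions are unpacked the geometry is straightforward.
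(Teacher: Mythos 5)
The paper offers no written proof of this proposition: it is presented as a direct reformulation of Corollary \ref{cor:type2} (and its reciprocal) in the language of splitting genus 1 pencils, with the surrounding discussion of diagram \eqref{diag quotient} doing the work. Your proposal fleshes out exactly that route --- track a general fiber through $q$ and $\tilde f$, use the trichotomy of types together with the squares of the image classes --- so in spirit it matches the intended argument. The forward direction is fine (modulo the harmless imprecision that $\tilde f(D)$ has \emph{geometric}, not arithmetic, genus 1 when $D$ meets an exceptional curve with multiplicity $\geq 2$, and that one should also note that the general member pulls back to the two distinct fibers $\nu^{-1}(s)\cup\iota(\nu^{-1}(s))$, which is what makes the pencil ``splitting'').

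The one step that does not hold up as written is your exclusion of type 1 in the converse. You exclude it ``because then $\nu$ would descend to a fibration on $Y$ with the same base, contradicting the degree 2 base map'' --- but the base map $\mathbb{P}^1_\nu\to\mathbb{P}^1_{|D|}$ has degree 2 only when $\iota$ acts nontrivially on the base of $\nu$, i.e.\ only when $\nu$ is already of type 2; for a type 1 fibration the induced base map is an isomorphism and nothing is contradicted. So the argument is circular. The correct exclusion is via Proposition \ref{prop:g(D)} (equivalently Corollary \ref{cor: type1 on quotient}): here $\iota$ is the non-symplectic deck involution with non-empty fixed locus, so a fibration of type 1 has $g(q(\nu^{-1}(s)))=0$ and its image is a conic bundle on the rational surface $Y$, not a genus 1 pencil --- contradicting the hypothesis that $|\tilde f\circ q(\nu^{-1}(s))|$ is a (splitting) genus 1 fibration. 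With that substitution, and keeping your exclusion of type 3 via $D^2>0$ from Corollary \ref{cor:type3}, the converse is complete.
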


\begin{remark}
Let $X$ be a K3 with  2-elementary $\mathrm{NS}(X)$. Assume that there is an elliptic fibration $\varepsilon: X \rightarrow \mathbb{P}^1$ with infinite Mordell--Weil group. Prop. \ref{prop:2-elementary} implies that there exists a RES $\varepsilon_R: R \rightarrow \mathbb{P}^1$ such that $\varepsilon$ is a quadratic base change of $\varepsilon_R$.
\end{remark}

\subsection{Examples of fibrations that are of different types with respect to different involutions}\label{subsec: different}

$ $

Given a K3 surface $X$ as in diagram \eqref{diag base chage} and its deck involution $\iota$, we observed that any elliptic fibration is of type 1,2 or 3 with respect to $\iota$. The type of a fibration is, of course, relative to the involution we are considering. Hence, a priori, a given elliptic fibration on $X$ can be of different types with respect to different involutions. A natural question is whether two different types can be simultaneously realized for the same elliptic fibration on a given K3 surface. We provide some examples, gathered from our previous papers \cite{WINEfield} and \cite{WINEequ} that give a positive answer to this question.

{\bf A Fibration realized as types 2 and 3.} Let us consider the K3 surface $X$ whose transcendental lattice is $U\oplus U(2)$. The frame lattices of the elliptic fibrations on $X$ are listed in \cite{GSclass}, and \cite{CM21} shows that this list also gives a classification of elliptic fibrations on $X$ up to automorphisms. In \cite{WINEfield} the surface $X$ is obtained as a quadratic base change of two different elliptic fibrations: $R_3$, whose singular fibers are $III^*+I_2+I_1$ and $R_4$, whose singular fibers are $I_4^*+2I_1$. In both cases the branch fibers are smooth and the deck involution of $X\ra R_i$ will be denoted by $\iota_i$, for $i=3, 4$. 

In \cite[Tables 5 and 6]{WINEfield} the type of each elliptic fibration on $X$  with respect to $\iota_3$ and $\iota_4$ are determined. In particular, by \cite[Tables 5 and 6]{WINEfield} one observes that $X$ admits an elliptic fibration whose singular fibers are $2III^*+2I_2+2I_1$. It is of type 2 with respect to $\iota_3$ and of type 3 with respect to $\iota_4$. 

{\bf A Fibration realized as types 1 and 3.} Consider the same surface and involutions as above. Then it admits another fibration whose singular fibers are $I_{12}^*$. It is of type 3 with respect to $\iota_3$ and of type 1 with respect to $\iota_4$.

{\bf A Fibration realized as types 1 and 2.} In \cite[Table 6.1]{WINEequ} the frame of the elliptic fibrations on a surface denoted by $S_{5,5}$ are listed, and by \cite{CM21} this gives a classification of the elliptic fibration up to automorphisms. The surface $S_{5,5}$ is obtained by a base change on a RES whose singular fibers are $2I_5+2I_1$ and the branch fibers are the $I_5$-fibers. Let us denote by $\iota_{5,5}$ the deck involution. In \cite[Table 6.1]{WINEequ}, we observe that on $S_{5,5}$ there is a fibration with singular fibers $2I_4^*+2I_2$ and that it is of type 1 with respect to $\iota_{5,5}$. The surface $S_{5,5}$ is the unique K3 surface with transcendental lattice $\left[\begin{array}{cc}2&0\\0&2\end{array}\right]$ and coincides with the surface that one finds as a quadratic base change of a RES with reducible fibers $I_4^*+2I_1$ by considering the $I_1$-fibers as branch fibers (see also Table \eqref{table extremal}). If we denote by $\iota_{1,1}$ the deck transformation, we observe the fibration with fibers $2I_4^*+2I_2$ is of type 2 with respect to $\iota_{1,1}$.

\section{Extremal elliptic fibrations}\label{Sec: extremal}
We consider special K3 surfaces, namely those obtained as a quadratic base change of extremal rational elliptic surfaces, studying a problem analogous to the one treated in \cite[Section 7]{MP}.
\begin{definition}
	An elliptic fibration $\mathcal{E}_Z:Z\ra\mathbb{P}^1$ on a surface $Z$ is called extremal if $rk(MW(\mathcal{E}_Z))=0$ and the Picard number is the largest possible, i.e. $rk(Pic(Z))=h^{1,1}(Y)$.
\end{definition}
In particular, a RES is extremal if the rank of the Mordell--Weil group is 0. An elliptic K3 surface is extremal if the rank of the Mordell--Weil group is 0 and its Picard number 20.

The extremal rational elliptic surfaces are classified (see \cite{MP}) and fit in a list of 16 cases.

The extremal elliptic K3 surfaces are classified (see \cite{SZ}) and fit in a list of 325 cases.

\begin{rem} An  extremal RES contains a finite number of negative curves. This allows one to classify all the conic bundles on it, see \cite{GSconcis}.  Since the conic bundles on RES induce elliptic fibrations on the K3 surface obtained by a quadratic base change, it seems of particular interest to consider quadratic base changes on extremal RES. 

An extremal elliptic K3 surfaces could contain infinitely many negative curves.\end{rem}

In \cite[Section 7]{MP}, the authors determine what extremal rational elliptic surfaces can be obtained as a base change of another extremal RES. Here we reconsider the analogous problem, namely, which extremal elliptic K3 surfaces can be obtained as a quadratic base change of an extremal RES?

\begin{proposition}
	A quadratic base change $\psi:\mathbb{P}^1\ra\mathbb{P}^1$ of a RES $\mathcal{E}_R:R\ra\mathbb{P}^1$ as in diagram \eqref{diag base chage} produces an extremal K3 surface if, and only if, the following conditions are satisfied:
	\begin{itemize}
		\item[a)] $\mathcal{E}_R:R\ra\mathbb{P}^1$ is an extremal RES;
		\item[b)] $\psi$ is branched along singular reduced fibers of $\mathcal{E}_R$.
	\end{itemize}
	There are 25 inequivalent extremal elliptic fibrations on K3 surfaces obtained as a quadratic base change by a RES.
\end{proposition}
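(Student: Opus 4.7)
My plan is to combine the Shioda--Tate formula with a bookkeeping of how Kodaira fibers transform under a quadratic base change, then enumerate over the Miranda--Persson list of the sixteen extremal rational elliptic surfaces. Recall that $X$ is K3 precisely when $\psi$ is branched along reduced fibers of $\mathcal{E}_R$, and that Shioda--Tate reads
\[
\rho(S) \;=\; 2 + \sum_{t}(m_t - 1) + \rk \MW(\mathcal{E}_S)
\]
for any elliptic surface $S$, with $\rho(R) \leq 10$ and $\rho(X) \leq h^{1,1}(X) = 20$.

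For the necessity of (a), I would observe that every section of $\mathcal{E}_R$ pulls back to a section of $\mathcal{E}_X$, inducing an injection $\MW(\mathcal{E}_R) \hookrightarrow \MW(\mathcal{E}_X)$. If $X$ is extremal, then $\rk \MW(\mathcal{E}_X) = 0$, forcing $\rk \MW(\mathcal{E}_R) = 0$; together with the component count below this pins down $\sum_R(m-1) = 8$, so $R$ is extremal.

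For (b) and the sufficient direction, I would use the standard base-change table for Kodaira fibers: $I_n \to I_{2n}$, $II \to IV$, $III \to I_0^*$, $IV \to IV^*$, smooth stays smooth, while the non-reduced types $I_n^*, II^*, III^*, IV^*$ are forbidden as branch fibers for $X$ to be K3. Combined with the fact that non-branch singular fibers duplicate, and writing $m, m'$ for the number of components of a singular branch fiber and its transform, this yields
\[
\sum_{t}^{X}(m_t - 1) \;=\; 2\sum_{t}^{R}(m_t-1) \;+\; \sum_{F\text{ sing.\ branch}} \bigl(m' - 2m + 1\bigr).
\]
A direct check shows the correction $m'-2m+1$ equals $1$ for each $I_n$ branch and $2$ for each $II$, $III$, or $IV$ branch. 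Assuming (a), the first summand on the right equals $16$; then $X$ is extremal, i.e., $\sum_X(m-1) = 18$ which together with $\rho(X) \leq 20$ forces $\rho(X)=20$ and $\rk \MW(\mathcal{E}_X)=0$, if and only if the correction sum equals $2$. Inspecting the Miranda--Persson list, the additive reduced fibers $II, III, IV$ appear in extremal RES only as companions in $[II^*, II]$, $[III^*, III]$, $[IV^*, IV]$, and are never accompanied by any $I_n$; hence under (b) the only way to attain correction $2$ with both branch fibers singular reduced is to branch at two $I_n$-type fibers, and any such pair works.

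For the final count, I would run through the sixteen extremal RES and count unordered pairs of $I_n$-type singular fibers modulo the automorphism group of $(R,\mathcal{E}_R)$, which acts as the full symmetric group on each set of identically-typed singular fibers. The four RES $[II^*, II]$, $[III^*, III]$, $[IV^*, IV]$, $[I_0^*, I_0^*]$ contribute $0$; each of $[II^*, 2I_1]$, $[III^*, I_2, I_1]$, $[IV^*, I_3, I_1]$, $[I_2^*, 2I_2]$, $[I_1^*, I_4, I_1]$, $[I_4^*, 2I_1]$ contributes $1$; and the multiplicative extremal RES $[I_9, 3I_1]$, $[I_8, I_2, 2I_1]$, $[I_6, I_3, I_2, I_1]$, $[I_5, I_5, 2I_1]$, $[I_4, I_4, I_2, I_2]$, $[I_3, I_3, I_3, I_3]$ contribute $2, 4, 6, 3, 3, 1$ respectively, yielding the total $0 + 6 + 19 = 25$. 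The main obstacle I expect is verifying that the automorphism group of $(R, \mathcal{E}_R)$ really does act transitively on each set of same-type singular fibers in every extremal case -- so that pairs related by such an automorphism give isomorphic elliptic K3 fibrations and no others are identified -- which must be checked case by case from the explicit Weierstrass models in \cite{MP}.
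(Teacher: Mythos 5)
Your overall strategy is essentially the paper's: necessity of (a) via the injection of Mordell--Weil groups, and everything else via Shioda--Tate together with a count of fiber components under the quadratic base change, run over the Miranda--Persson list. The paper performs the component count as an unstructured case-by-case check, whereas you package it into the correction term $\sum (m'-2m+1)$ over singular branch fibers; your values ($1$ for an $I_n$ branch fiber, $2$ for $II$, $III$, $IV$, and $0$ for a smooth branch fiber) are correct, and your enumeration $6+2+4+6+3+3+1=25$ reproduces exactly the $25$ rows of the paper's table. Your closing worry about transitivity of $\mathrm{Aut}(R,\mathcal{E}_R)$ on same-type fibers is not something the paper resolves either: its ``$25$ inequivalent fibrations'' is a count of combinatorial data (an extremal RES together with an unordered pair of branch-fiber types), and the identification of which rows yield the same K3 surface is carried out afterwards via the transcendental lattice.

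There is, however, a genuine gap in the ``only if'' direction of (b), and your own formula makes it visible. You prove ``$X$ extremal if and only if the correction sum equals $2$,'' but you analyze the equation ``correction $=2$'' only under hypothesis (b). Correction $=2$ is also achieved by a single branch fiber of type $II$, $III$ or $IV$ (contributing $2$) together with a smooth branch fiber (contributing $0$), and the extremal RES with configurations $II^*+II$, $III^*+III$ and $IV^*+IV$ admit exactly this choice. Concretely, $y^2=x^3+t^2$ is the extremal RES with fibers $IV^*+IV$; a quadratic base change branched at the $IV$ fiber and at a smooth fiber produces an elliptic K3 with configuration $3IV^*$, whose trivial lattice has rank $20$, hence an extremal fibration for which only one branch fiber is singular. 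So these cases must either be excluded by an argument you have not supplied, or be acknowledged as violating the stated necessity of (b). Note that the paper's own proof dismisses them with ``similarly one excludes the case in which only one of the branch fibers is singular,'' which is precisely the step that fails when the singular branch fiber is additive and reduced; your write-up inherits, rather than repairs, this omission, and at minimum it should be flagged explicitly.
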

\begin{proof}
	Let $X$ be an extremal elliptic K3 that is a quadratic base change of a RES $R$ as in diagram \eqref{diag base chage}. Since every section of $\mathcal{E}_R$ induces a section of $\mathcal{E}_X$, and $\rk(MW(\mathcal{E}_X))=0$, we must have $\rk(MW(\mathcal{E}_R))=0$ and hence $\mathcal{E}_R$ has to be extremal.
	 
	 If the base change was branched on two smooth fibers, then $\mathcal{E}_X$  has twice as many reducible fibers as $R$. The lattice generated by the non-trivial components of the reducible fibers of $\mathcal{E}_R$ has rank 8 (since $\mathcal{E}_R$ is extremal and $\rho(R)=10$), so the one generated by the non-trivial components of the reducible fibers of $\mathcal{E}_R$ has rank 16. Hence $\rho(X)=2+16=18$, which contradicts the fact that $\mathcal{E}_X$ is extremal. Similarly one excludes the case in which only one of the branch fibers is singular. 
	We recall that the branch fibers are necessarily reduced, otherwise $X$ would not be a K3 surface.
	
	Vice versa, by a case-by-case analysis, one can directly check that if $\mathcal{E}_R$ is an extremal RES and one considers a base change branched over two singular reduced fibers one obtains an elliptic K3 surface $\mathcal{E}_X:X\ra\mathbb{P}^1$ such that the trivial lattice has rank 20. This implies that $\rho(X)=20$ and $\rk(\mathcal{E}_X)=0$.
	
	This case-by-case analysis produces the following table, which allows one also to determine all the extremal elliptic K3 surfaces that can be obtained as a quadratic base change of a RES.
	
	In the first column there are all the extremal rational elliptic surfaces that have at least two singular reduced fibers, in the second the admissible choice for the branch fibers, in the third the singular fibers of $X$, which completely determines the transcendental lattice of $X$, given in the fourth column and obtained by comparing the fibrations with the one listed in \cite{SZ}. The transcendental lattice is represented by a matrix in the form $\left[\begin{array}{cc}a&b\\b&c\end{array}\right]$ and we express it as $(a,b,c)$. 
	
	\begin{equation}\label{table extremal}\begin{array}{cccccc}\#&\mbox{ singular fibers }\mathcal{E}_R&\mbox{branch fibers}&\mbox{ singular fibers }\mathcal{E}_X&T_X\\
		1&II^*+2I_1&2I_1&2II^*+2I_2&(2,0,2)\\
		2&I_4^*+2I_1&2I_1&2I_4^*+2I_2&(2,0,2)\\
		3&I_9+3I_1&2I_1&2I_9+2I_2+2I_1&(4,2,10)\\
		4&I_9+3I_1&I_9+I_1&I_{18}+I_2+4I_1&(2,0,2)\\
		5&III^*+I_2+I_1&I_2+I_1&2III^*+I_4+I_2&(2,0,4)\\
		6&I_8+I_2+2I_1&2I_1&2I_8+4I_2&(4,0,4)\\
		7&I_8+I_2+2I_1&I_2+I_1&2I_8+I_4+I_2+2I_1&(2,0,4)\\
		8&I_8+I_2+2I_1&I_8+I_1&I_{16}+3I_2+2I_1&(2,0,4)\\
		9&I_8+I_2+2I_1&I_8+I_2&I_{16}+I_4+4I_1&(2,0,2)\\
		10&IV^*+I_3+I_1&I_3+I_1&2IV^*+I_6+I_2&(2,0,6)\\
		11&I_2^*+2I_2&2I_2&2I_2^*+2I_4& (4,0,4)\\
		12&I_1^*+I_4+I_1&I_4+I_1&2I_1^*+I_8+I_2& (2,0,8)\\
		13&I_6+I_3+I_2+I_1&I_2+I_1&2I_6+I_4+2I_3+I_2& (6,0,12)\\
		14&I_6+I_3+I_2+I_1&I_3+I_1&3I_6+3I_2&(2,0,6)\\
		15&I_6+I_3+I_2+I_1&I_6+I_1&I_{12}+2I_3+3I_2&(2,0,12)\\
		16&I_6+I_3+I_2+I_1&I_3+I_2&3I_6+I_4+2I_1&(4,0,6)\\
		17&I_6+I_3+I_2+I_1&I_6+I_2&I_{12}+I_4+2I_3+2I_1&(4,2,4)\\
		18&I_6+I_3+I_2+I_1&I_6+I_3&I_{12}+I_6+2I_2+2I_1&(2,0,4)\\
		
		19&2I_5+2I_1&2I_1&4I_5+2I_2&(10,0,10)\\
		20&2I_5+2I_1&I_5+I_1&I_{10}+2I_5+I_2+2I_1&(2,0,10)\\
		21&2I_5+2I_1&2I_5&2I_{10}+4I_1&(2,0,2)\\
		22&2I_4+2I_2&2I_2&6I_4&(4,0,4)\\
		23&2I_4+2I_2&I_4+I_2&I_8+3I_4+2I_2&(4,0,8)\\
		24&2I_4+2I_2&2I_4&2I_8+4I_2&(4,0,4)\\
		25&4I_3&2I_3&2I_6+4I_3&(6,0,6)\\
	\end{array}\end{equation}
	
	One observes that cases 1,2,4,9,21 correspond to the same K3 surface since the transcendental lattice identifies the K3 surface uniquely. The same holds true for cases 5,7,8, for cases 6,11,22,24, and for cases 10 and 14. Hence we obtained 15 K3 surfaces, some of which admit more than one extremal elliptic fibration.
\end{proof}

\begin{rem} The fact that on the same K3 surface, there are inequivalent elliptic fibrations (in particular extremal elliptic fibrations) which are obtained as a base change by a RES, implies that there are fibrations of type 2 with respect to the involution related with one of these base changes. More explicitly, we can interpret the K3 surface $X$ as a base change of a specific RES. In particular, $X$ is endowed with the cover involution $\iota$. All the other inequivalent elliptic fibrations on $X$ obtained as base changes from other rational elliptic surfaces are of type 2 with respect to $\iota$.\end{rem}

We observe that in \cite{WINEequ} we considered the base change in case 21 of the previous table, denoted by $S_{5,5}$ in \cite{WINEequ}.
The cases 1,2,4,9,21 correspond to a K3 surface with a 2-elementary N\'eron--Severi lattice. The elliptic fibrations for the latter are classified (see eg. \cite{CG}).

\begin{rem}
	In \cite[Theorem 7.3]{MP}, the authors proved that the extremal RES with singular fibers $I_8+I_2+2I_1$ and $2I_4+2I_2$ can be obtained by a quadratic base change on another extremal RES. Composing the quadratic base change described in \cite[Theorem 7.3]{MP} with the one considered in Table \ref{table extremal}, one obtains a $4:1$ base change from an extremal RES which produces an extremal elliptic K3 surface. The cover group can be either $(\Z/2\Z)^2$ or $\Z/4\Z$ and both these cases appear (for example the K3 surface in line 6 can be obtained as $(\Z/2\Z)^2$ cover of the RES with singular fibers $I_4^*+2I_1$, the one in line 9 as $\Z/4\Z$ cover of the same RES).
\end{rem}
\section{K3s as double cover of minimal rational surfaces}\label{sec: minimal}
As in the previous two sections, we consider elliptic K3 surfaces obtained as a quadratic base change of a RES $\mathcal{E}_R:R\ra\mathbb{P}^1$. In what follows, we present a different point of view on the same surfaces, namely, such K3 surfaces can be naturally realized as a double cover of a minimal rational surface whose branch locus satisfies certain properties. We first describe the relationship between these different points of view and then show that their interactions allow one to write explicit equations for the elliptic fibrations on $X$, following \cite{WINEequ}.

\begin{proposition}\label{prop: minimal model of RES}
	Let $\mathcal{E}_R:R\rightarrow \mathbb{P}^1$ be a relatively minimal RES. Let $\beta:R\ra R^{o}$ be a composition of contractions of $(-1)$-curves to a minimal surface $R^{o}$, then $R^{o}$ is either $\mathbb{P}^2$ or $\mathbb{P}^1\times\mathbb{P}^1$ or $\mathbb{F}_2$. In the latter case $\mathcal{E}_R:R\rightarrow\mathbb{P}^1$ admits at least one reducible fiber.
	\end{proposition}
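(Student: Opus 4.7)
The plan is to combine the classification of minimal rational surfaces with an adjunction computation on $R$. Since $R$ is rational, the target $R^{o}$ of any sequence of contractions of $(-1)$-curves is a minimal rational surface, and by the Enriques--Kodaira classification these are exactly $\mathbb{P}^2$ and the Hirzebruch surfaces $\mathbb{F}_n$ with $n=0$ or $n\geq 2$ (recall $\mathbb{F}_1$ is not minimal, as its negative section is a $(-1)$-curve). Thus I only need to exclude $\mathbb{F}_n$ for $n\geq 3$, and to handle the $\mathbb{F}_2$ case.

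The key input is a classification of negative curves on a RES. Using $-K_R=F$ (the fiber class, from the canonical bundle formula with $\chi(R)=1$), adjunction on an irreducible curve $C\subset R$ with $C^2<0$ reads $2g(C)-2=C^2-C\cdot F$. If $C$ is a fiber component, $C\cdot F=0$ forces $C^2=-2$ and $g(C)=0$; if $C$ is horizontal, then $C\cdot F\geq 1$ and $g(C)\geq 0$ force $C^2=-1$, $C\cdot F=1$, so $C$ is a section. Hence every irreducible negative curve on $R$ has self-intersection $-1$ or $-2$.

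Next I would use the standard fact that strict transforms under blowing up have non-increasing self-intersection: if $\pi\colon X\to Y$ is a blow-up at $p$ and $C\subset Y$ has multiplicity $m$ at $p$, then $\pi^*C=\tilde C+mE$ gives $\tilde C^{\,2}=C^2-m^2\leq C^2$. Applied inductively to the morphism $\beta\colon R\to R^{o}$, this shows that the strict transform on $R$ of any irreducible curve on $R^{o}$ has self-intersection at most that of the original. If $R^{o}=\mathbb{F}_n$ with $n\geq 3$, its negative section $s^{o}$ satisfies $(s^{o})^2=-n\leq -3$, so its strict transform on $R$ would be an irreducible curve of self-intersection $\leq -3$, contradicting the classification above. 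This leaves $R^{o}\in\{\mathbb{P}^2,\mathbb{F}_0=\mathbb{P}^1\times\mathbb{P}^1,\mathbb{F}_2\}$.

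For the last assertion, if $R^{o}=\mathbb{F}_2$ the unique $(-2)$-curve on $R^{o}$ lifts under $\beta$ to an irreducible curve $\tilde C\subset R$ with $\tilde C^{\,2}\leq -2$. By the classification of negative curves recalled above, $\tilde C^{\,2}=-2$ and $\tilde C$ is necessarily a component of a reducible fiber of $\mathcal{E}_R$; in particular $\mathcal{E}_R$ has at least one reducible fiber. The proof is essentially bookkeeping once these two ingredients --- the classification of negative curves on a RES via adjunction, and the monotonicity of self-intersection under strict transform --- are in hand; the only minor subtlety is remembering that $\mathbb{F}_1$ is excluded from the list of minimal rational surfaces.
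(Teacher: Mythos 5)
Your proof is correct and follows essentially the same route as the paper's: both arguments exclude $\mathbb{F}_n$ for $n\geq 3$ by noting that $R$, being a relatively minimal RES, carries no irreducible curves of self-intersection below $-2$, and both deduce the reducible fiber in the $\mathbb{F}_2$ case from the $(-2)$-curve. You merely make explicit two steps the paper leaves implicit (the adjunction classification of negative curves on $R$ via $-K_R=F$, and the monotonicity of self-intersection under strict transform), which is a welcome addition but not a different method.
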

\begin{proof} 
Since there are no curves on $R$ with self-intersection less than $-2$, also on $R^{o}$ there cannot be curves with self-intersection less than $-2$. This excludes all the Hirzebruch surfaces $\mathbb{F}_n$ with $n\geq 3$ as possible minimal models of $R$. For the same reason if $R^{o}\simeq \mathbb{F}_2$, then $R$ contains a $(-2)$-curve, and hence it admits a reducible fiber. The surface $\mathbb{F}_0$ is $\mathbb{P}^1\times \mathbb{P}^1$ and the surface $\mathbb{F}_1$ is not minimal.
\end{proof}

\begin{proposition} 
Let $X$ be obtained as a quadratic base change by a RES $\mathcal{E}_R:R\rightarrow \mathbb{P}^1$ as in diagram \eqref{diag base chage}. Let $\beta:R\rightarrow R^{o}$ be the birational morphism described in Proposition \ref{prop: minimal model of RES}. Then $X$ is the minimal resolution of a double cover of $R^{o}$ branched on a reducible curve with at least two irreducible components.
\end{proposition}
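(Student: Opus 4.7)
The plan is to identify the branch divisor of $X\to R$ explicitly and then descend it along $\beta$ to produce a double cover of $R^{o}$ whose minimal resolution is $X$. By the construction recalled in diagram \eqref{diag base chage}, $X$ is the minimal resolution of $R\times_{\psi}\mathbb{P}^1$, which is the double cover of $R$ branched along the two reduced fibers $F_1,F_2$ of $\mathcal{E}_R$ lying over the branch points of $\psi$. Since each $F_i$ has class $-K_R$, the branch divisor $F_1+F_2$ lies in $|-2K_R|$ and the cover is determined by $\mathcal{L}=\mathcal{O}_R(-K_R)$ with $\mathcal{L}^{\otimes 2}\cong \mathcal{O}_R(F_1+F_2)$.

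Next I would set $B_i:=\beta_*F_i$ on $R^{o}$ and $B:=B_1+B_2$. Using $\beta_*K_R=K_{R^{o}}$ for the birational morphism of smooth surfaces $\beta$, each $B_i$ has class $-K_{R^{o}}$, so $B\in|-2K_{R^{o}}|$ is divisible by $2$ in $\mathrm{Pic}(R^{o})$ (which is torsion free in all three cases in Proposition \ref{prop: minimal model of RES}). This provides the line bundle $\mathcal{L}^{o}=\mathcal{O}_{R^{o}}(-K_{R^{o}})$ and an associated normal double cover $\pi^{o}\colon X^{o}\to R^{o}$ branched along $B$. To see that $B$ is reducible with at least two components, I would observe that, since $F_1$ and $F_2$ are disjoint distinct fibers, their non-exceptional components are pairwise distinct; any two of them with a common image in $R^{o}$ would give $\beta$ a generic fibre of cardinality $\geq 2$, contradicting birationality. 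Hence $B_1$ and $B_2$ are nonzero effective divisors sharing no component.

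It then remains to identify the minimal resolution $\widetilde{X^{o}}$ of $X^{o}$ with $X$. Over the open set $U:=R^{o}\setminus \beta(\mathrm{Exc}(\beta))$, the morphism $\beta$ restricts to an isomorphism matching $B_i\cap U$ with $F_i\cap \beta^{-1}(U)$, so the two double covers restrict to isomorphic covers of $U$. Consequently $X$ and $\widetilde{X^{o}}$ share a dense open set and are birational. Both are smooth projective surfaces with trivial canonical class (for $X$ by hypothesis; for $\widetilde{X^{o}}$ from the double cover formula $K_{X^{o}}=(\pi^{o})^{*}(K_{R^{o}}+\tfrac{1}{2}B)=0$ together with the rationality of $R^{o}$, which forces irregularity zero), so both are K3 surfaces and their birational equivalence is an isomorphism.

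The main obstacle I expect is this last step: verifying that $\widetilde{X^{o}}$ really is K3 rather than just a smooth surface with $K\equiv 0$. Concretely, one must check that the singularities of $X^{o}$ lying over points where $B_1$ and $B_2$ meet, or where some $B_i$ itself becomes singular under $\beta$, are rational double points, so that the resolution is crepant and preserves triviality of $K$. This is a routine local analysis for double covers with a reduced even branch curve whose only singularities are of type $A_n$, as is the case here, since such singularities arise from fiber components meeting contracted $(-1)$-curves.
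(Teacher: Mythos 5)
Your core idea---push the branch divisor $F_1+F_2\in|-2K_R|$ forward along $\beta$ to $B=B_1+B_2\in|-2K_{R^{o}}|$ and recognize $X$ as the minimal resolution of the resulting double cover---is the same as the paper's, and your argument that $B_1$ and $B_2$ are nonzero and share no component is fine. The gap is in the identification of $X$ with the minimal resolution $\widetilde{X^{o}}$. You reduce it to the claim that the only singularities of $B$ are of type $A_n$, so that $X^{o}$ has only rational double points and its crepant resolution is a K3. That claim is false as stated: a reduced fiber of type $IV$ pushes down to three concurrent lines, which is a $D_4$ point of $B$; and at a base point of the pencil where one of the two branch fibers is itself singular, the local singularity of $B$ is again of type $D$ or worse. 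One would moreover have to exclude non-simple singularities of $B$, which is not a one-line local computation, so the step you call ``routine'' is exactly where the work would be.

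The paper sidesteps all of this. Since $q\colon X\to R$ and $\beta\colon R\to R^{o}$ are morphisms, the composite $\beta\circ q\colon X\to R^{o}$ is a generically $2:1$ \emph{morphism} from a smooth surface, branched on the two curves $B_1,B_2$ of the anticanonical pencil. Its Stein factorization $X\to X^{o}\to R^{o}$ realizes $X^{o}$ as the normal double cover branched on $B$ and $X\to X^{o}$ as a resolution of singularities; this resolution is minimal because $X$, being a K3, contains no $(-1)$-curves. In particular the singularities of $X^{o}$ are rational double points as a \emph{consequence} (the resolution is automatically crepant since $K_X=0$ and $K_{X^{o}}=(\pi^{o})^{*}(K_{R^{o}}-K_{R^{o}})=0$), rather than a hypothesis you must verify from the local geometry of $B$. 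Replacing your last paragraph by this observation closes the argument and recovers the paper's proof.
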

\begin{proof}
By diagram \eqref{diag base chage}, we obtain a generically $2:1$ map $q_X:X\ra R$ branched along two fibers of $R$. The birational morphism  $\beta:R\rightarrow R^o$ is the blow up of $R^o$ in the base points of a pencil $\mathcal{P}$ of genus 1 curves and so $\beta$ maps two fibers of $\mathcal{E}_R$ to two curves in this pencil. Hence $\beta\circ q_X:X\ra R^o$ is a generically $2:1$ map branched on two curves in the pencil $\mathcal{P}$.\end{proof}
We consider a reciprocal of the Proposition \ref{prop: minimal model of RES} which allows one to interpret certain double covers of minimal rational surfaces as base change of rational elliptic surfaces.
To state the result, we denote: by $H$ the class of a line in $\mathbb{P}^2$; by $h_1$, $h_2$ the generators of $\mathrm{Pic}(\mathbb{P}^1\times\mathbb{P}^1)$ such that $h_1^2=h_2^2=0$ and $h_1\cdot h_2=1$; by $\Gamma$, $\Phi$ the generators of $\mathrm{Pic}(\mathbb{F}_n$ such that $\Phi^2=0$, $\Gamma^2=-n$, $\Phi\Gamma=1$.

\begin{corollary}\label{cor: double cover minimal rat are base change} Let $X$ be a K3 surface and $X\ra W$ be a double cover of a minimal rational surface $W$ branched on the union of two (possibly reducible) curves $C_1$ and $C_2$. If $W=\mathbb{P}^2$ (resp. $\mathbb{P}^1\times \mathbb{P}^1$, $\mathbb{F}_2$) and $C_1\sim C_2\in |3H|$, (resp. $|2h_1+2h_2|$, $|2\Gamma+4\Phi|$), then $X$ is obtained by a quadratic base change on the RES obtained blowing up $W$ in the points $C_1\cap C_2$.\end{corollary}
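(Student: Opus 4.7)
The plan is to produce, in each of the three cases, a RES $\mathcal{E}_R:\tilde R \to \mathbb{P}^1$ realizing $X$ as the quadratic base change of diagram \eqref{diag base chage}. The RES will be obtained from $W$ by blowing up the base scheme of the pencil $\mathcal{P}=\langle C_1, C_2\rangle$, and the base change will be branched along the two fibers of $\mathcal{E}_R$ that correspond to $C_1$ and $C_2$.

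First I would verify via adjunction that $C_i\in|{-K_W}|$ in each of the three cases, since $K_{\mathbb{P}^2}=-3H$, $K_{\mathbb{P}^1\times\mathbb{P}^1}=-2h_1-2h_2$, and $K_{\mathbb{F}_2}=-2\Gamma-4\Phi$. Thus $p_a(C_i)=1$ by adjunction, and the pencil $\mathcal P$ has base scheme $Z=C_1\cap C_2$ of length $9,8,8$ respectively. Let $\beta:\tilde R\to W$ be the iterated blow-up resolving $Z$. By Bertini, a general member of $\mathcal{P}$ is smooth, and the induced morphism $\mathcal{E}_R:\tilde R\to\mathbb{P}^1$ is a relatively minimal elliptic fibration. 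The last exceptional divisor over each base point of $Z$ is a $(-1)$-section of $\mathcal{E}_R$, so $(\tilde R,\mathcal{E}_R,\sigma)$ is indeed a RES. The two fibers of $\mathcal{E}_R$ corresponding to $C_1$ and $C_2$ are the total transforms $\beta^*C_i$ minus the fixed part of the pencil, and in particular contain the strict transforms $\tilde C_1,\tilde C_2$.

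Second I would show that the given double cover $X\to W$ factors through $\tilde R$ as a double cover branched on $\tilde C_1+\tilde C_2$. Locally around any point of $Z$, write the double cover as $s^2=f_1 f_2$ with $f_i$ defining $C_i$. Pulling back by $\beta$, each exceptional divisor $E$ of $\beta$ enters $\beta^*(f_1f_2)$ with multiplicity $\mathrm{mult}_E\beta^*C_1+\mathrm{mult}_E\beta^* C_2$; the hypothesis that $X$ is a smooth K3 surface forces this total multiplicity to be even along every exceptional divisor, so $\beta^*(f_1 f_2)=g^2\,\tilde f_1\,\tilde f_2$ for some $g$ supported on the exceptional locus. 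The substitution $s\mapsto g\,s'$ then yields $(s')^2=\tilde f_1\tilde f_2$ on $\tilde R$. Hence the minimal resolution of $X\to W$ coincides with the double cover of $\tilde R$ branched along $\tilde C_1+\tilde C_2$, that is, along two fibers of $\mathcal{E}_R$. By definition this is the quadratic base change $\tilde R\times_\psi\mathbb{P}^1\to\tilde R$, where $\psi:\mathbb{P}^1\to\mathbb{P}^1$ is the double cover ramified over the two points of the target of $\mathcal{E}_R$ parametrizing $C_1$ and $C_2$; this identifies $X$ with the K3 surface of diagram \eqref{diag base chage}.

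The main obstacle is the parity claim in the second step, particularly when $Z$ is non-reduced or one of the $C_i$ is singular at some point of $Z$. In those situations, at each step of the iterated blow-up one must check that the multiplicity of $\beta^*(C_1+C_2)$ along every new exceptional divisor is even, so that the equation $s^2=\beta^*(f_1f_2)$ can always be transformed into a standard branched double cover on the next stage. This evenness amounts to requiring that $C_1+C_2\in|{-2K_W}|$ has only simple (ADE) singularities on $W$, a condition enforced by the assumption that $X$ is a smooth K3 surface. I would carry out the detailed tracking of multiplicities by induction on the stages of $\beta$, modeled on the canonical resolution of double covers of smooth surfaces.
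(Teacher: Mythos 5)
Your strategy is the same as the paper's: observe that $|3H|$, $|2h_1+2h_2|$ and $|2\Gamma+4\Phi|$ are the anticanonical linear systems of the respective minimal surfaces, so that $C_1$ and $C_2$ span a pencil of genus~1 curves, blow up its base scheme to obtain a RES, and identify the given double cover with the quadratic base change of diagram \eqref{diag base chage} branched over the two fibers corresponding to $C_1$ and $C_2$. The paper's proof records only the first observation (together with the existence of smooth members of $|2\Gamma+4\Phi|$) and leaves the factorization of the cover through $\tilde R$ implicit, so your second step is a useful addition.

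That second step, however, rests on a claim that fails in exactly the degenerate cases you single out as the main obstacle. It is not true that smoothness of $X$ forces the multiplicity of $\beta^*(C_1+C_2)$ along every exceptional divisor to be even, nor that the induced cover of $\tilde R$ is branched precisely on the strict transforms $\tilde C_1+\tilde C_2$. If some $C_i$ is singular at a base point of the pencil (which happens for reducible $C_i$ whose singular points lie on the other curve), or if base points are infinitely near, then the fiber $F_i$ of $\mathcal{E}_R$ over $[C_i]$ equals $\beta^*C_i-\sum_jE_j$ and contains exceptional components besides the strict transform; such a component occurs with odd multiplicity in $\beta^*(C_1+C_2)=F_1+F_2+2\sum_jE_j$ (for instance multiplicity $3$ over a point where $C_1$ is nodal and $C_2$ smooth), and it genuinely belongs to the branch locus upstairs. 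The statement you need, and which repairs the argument, is that the branch divisor of $X\rightarrow\tilde R$ is $\beta^*(C_1+C_2)-2\sum_jE_j=F_1+F_2$, i.e.\ the two full fibers over $[C_1]$ and $[C_2]$ rather than their strict transforms; this is exactly the branch locus of the quadratic base change, and reducedness of these fibers (equivalently, the condition for $X$ to be a K3) guarantees that no further even part can be extracted. Your appeal to ADE singularities does not rescue the evenness either, since for example a $D_4$ point of the branch curve has odd multiplicity; the divisor-class computation above is the correct substitute.
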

\begin{proof}
By \cite[Chapter V Corollary 2.18]{Hartshorne}, the linear system $|2\Gamma+4\Phi|$ contains smooth curves. We observe that all linear systems in the statement are the anticanonical linear system in the respective surface, and hence the genus of a general member is 1. Hence it suffices to consider two smooth curves in such linear systems to find the required pencil on the minimal surface $W$.
\end{proof}

\begin{remark}
We observe that:
\begin{itemize}
\item[i)] by choosing $C_1$ and $C_2$ general in their linear system in $\mathbb{P}^2$ or $\mathbb{P}^1\times \mathbb{P}^1$ (resp. $\mathbb{F}_2$), one can recover a general  RES with no reducible fibers (resp. with exactly one reducible fiber). The other configurations of singular fibers can then be obtained by specializing $C_1$ and $C_2$.
\item[ii)] explicit examples of contractions of the RES to different minimal models are provided in \cite[Figures 1 and 3]{WINEfield}.
\end{itemize}
\end{remark}

The following example deals with a well-known case and shows the relevance of Corollary \ref{cor: double cover minimal rat are base change}.

 \begin{example}
 {\rm Let $X$ be the double cover of $\mathbb{P}^2$ branched on six lines in general position $\ell_1,\ldots, \ell_6$. The elliptic fibrations on $X$ are classified by Kloosterman, \cite{Kl}. In order to show the existence of the elliptic fibrations he considered pencils of curves in $\mathbb{P}^2$ whose pullback to $X$ give pencils of genus 1 curves. A slightly different point of view is the following: let us consider the curves $C_1=\ell_1\cup \ell_2\cup \ell_3$ and $C_2=\ell_4\cup \ell_5\cup \ell_6$. The pencil generated by $C_1$ and $C_2$ is a pencil of genus 1 curves and the blow-up of $\mathbb{P}^2$ in its base point yields a RES $\mathcal{E}_R:R\ra\mathbb{P}^1$ with two fibers of type $I_3$ (corresponding to the curves $C_1$ and $C_2$). By Corollary \ref{cor: double cover minimal rat are base change}, $X$ is the base change of $\mathcal{E}_R:R\ra\mathbb{P}^1$ branched on the two $I_3$-fibers, so $X$ is naturally endowed with an elliptic fibration with two fibers $I_6$ (the ones obtained as a base change of $R$ branched on the two $I_3$ fibers). The other elliptic fibrations on $X$ are either induced by a generalized conic bundle (classified in \cite[Section 9]{Kl}) or by splitting genus 1 pencil (classified in \cite[Section 8]{Kl}). }

\end{example}
The described point of view, i.e. consider K3 surfaces which are a double cover of minimal rational surfaces and elliptic fibrations induced by linear systems of plane curves, is the one adopted in \cite{Kl} and \cite{CG}. In both these papers the surface $R^{o}$ is isomorphic to $\mathbb{P}^2$. In \cite{WINEfield}, we also considered elliptic fibrations related with a model of $X$ as double cover of $R^{o}\simeq \mathbb{P}^1\times\mathbb{P}^1$ and $R^o\simeq \mathbb{F}_2$ (see e.g. \cite[Figures 1 and 3]{WINEfield}). In particular, in \cite{WINEfield} one considers elliptic fibrations whose field of definition is not necessarily algebraically closed; in this case, it is possible that $R$ cannot be contracted over the field of definition to $\mathbb{P}^2$, but it can be contracted to some other minimal models of rational surfaces.

\subsection{An algorithm to determine Weierstrass equations}
$ $

In \cite{WINEequ}, the authors consider certain K3 surfaces obtained as a quadratic base change of a RES, classify conic bundles $|B|$, and construct generalized conic bundles on the RES. They consider the push-forward of these conic bundles to $R^o$ obtaining pencils of curves in $\mathbb{P}^2$, hence intertwining the two points of view. We observe that the same can be reproduced for $R^o= \mathbb{P}^1\times\mathbb{P}^1$ or $\mathbb{F}_2$. The advantage of this point of view is that the classification of the conic bundles is quite easy, in particular when one has a strong control on the negative curves on $R$, as in the case of extremal rational elliptic fibrations. On the other hand, one is able to explicitly write equations for pencils of plane curves (or of curves in $\mathbb{P}^1\times\mathbb{P}^1$) with some prescribed properties of the base points. This allows the authors to apply an algorithm that gives the Weierstrass equation of some of the classified elliptic fibrations on $X$.

By Corollary \ref{cor: double cover minimal rat are base change}, we know that $X$ is a double cover of $\mathbb{P}^2$ branched on the union of two cubics, so its equation is of the form
\begin{equation}\label{eq: V double cover P2}w^2=f_3(x_0:x_1:x_2)g_3(x_0:x_1:x_2).\end{equation}
Let $B$ be a conic bundle on $R$, i.e., a basepoint-free linear
system of rational curves giving $R \rightarrow\mathbb{P}^1_\tau$.
Pushing forward to $\mathbb{P}^2$, $B$ is given by a pencil of
plane rational curves with equation $h(x_0:x_1:x_2,\tau)=0$.  The
polynomial $h(x_0:x_1:x_2,\tau)$ is homogeneous in $x_0$, $x_1$,
$x_2$, say of degree $e\geq 1$ and linear in $\tau$.

For general
$\tau$, we must find an isomorphism of the curve
$h(x_0:x_1:x_2,\tau) =0$ with $\mathbb{P}^1$, and extract the
images of the four intersection points with $f_3g_3 =0$.

When $e \leq 3$, an isomorphism with $\mathbb{P}^1$ is provided by
projection from a point of order $e-1$ on the curve (e.g. any
point in $\mathbb{P}^2$ if $e=1$, a point on the conic if $e=2$,
and a double point of the cubic if $e=3$).  Such a point
necessarily exists (in the case $e=3$, the singularity must be a
basepoint of the pencil) and is also necessarily a basepoint of
the original pencil of cubics giving $\E_R$.  Up to acting by
$\PGL_3(\C)$, we may assume that this point is $(0:1:0)$.

\noindent\textbf{Algorithm when $e\leq 3$.}
\begin{enumerate}
	\item Compute the resultant of the polynomials
	$f_3(x_0:x_1:x_2)g_3(x_0:x_1:x_2)$ and $h(x_0:x_1:x_2,\tau)$ with
	respect to the variable $x_1$. The result is a polynomial
	$r(x_0:x_2,\tau)$ which is homogeneous in $x_0$ and $x_2$,
	corresponding to the images of \textit{all} the intersection
	points $\{f_3g_3=0 \} \cap \{h_\tau=0\}$ after projection from
	$(0:1:0)$.
	
	\item Since $B$ is a conic bundle, $r(x_0:x_2,\tau)$ will be of
	the form $a(x_0:x_2,\tau)^2b(x_0:x_2,\tau)c(\tau)$, where $a$ and
	$b$ are homogeneous in $x_0$ and $x_2$, the degree of $a$ depends
	upon $e$ and the degree of $b$ in $x_0$ and $x_2$ is 4. \item The
	equation of $V$ is now given by $w^2=r(x_0:x_2,\tau)$, which is
	birationally equivalent to
	\begin{equation}\label{eq: genus 1 fibration by conic bundles}w^2=c(\tau)b(x_0:x_2,\tau),\end{equation} by the change of coordinates
	$w\mapsto wa(x_0:x_2,\tau)$. Since for almost every $\tau$, the
	equation \eqref{eq: genus 1 fibration by conic bundles} is the
	equation of a $2:1$ cover of $\mathbb{P}^1_{(x_0:x_2)}$ branched
	in 4 points, \eqref{eq: genus 1 fibration by conic bundles} is the
	equation of the genus 1 fibration on the K3 surface $V$ induced by
	the conic bundle $|B|$. \item If there is a section of fibration
	\eqref{eq: genus 1 fibration by conic bundles}, then it is
	possible to obtain the Weierstrass form by standard
	transformations.\end{enumerate}

\begin{rem} The algorithm is not restricted to conic bundles. It can be applied exactly in the same way to the generalized conic bundles.\end{rem}

When $e\geq 4$, the projection from a point may suffice, for example,
if all curves have a basepoint of degree $e-1$. However there are
several conic bundles whose general members cannot be parametrized
by lines and in this case, one has to consider conics. In \cite[Section 5.2]{WINEequ}, an algorithm is presented also in this case. Moreover, also in the case of elliptic fibrations induced by splitting genus 1 pencils, similar algorithms exist, see \cite[Section 6.1.2]{WINEequ}.

\end{document}